\providecommand{\U}[1]{\protect\rule{.1in}{.1in}}
\newtheorem{theorem}{Theorem}
\theoremstyle{plain}
\newtheorem{corollary}{Corollary}
\newtheorem{lemma}{Lemma}
\newtheorem{proposition}{Proposition}
\newtheorem{remark}{Remark}
\numberwithin{equation}{section}
\begin{document}
\title[The Square of the Legendre Polynomials Operator]{Glazman-Krein-Naimark Theory, Left-Definite Theory and the Square of the
Legendre Polynomials Differential Operator }
\author{Lance L. Littlejohn}
\address{Department of Mathematics, Baylor University, One Bear Place \#97328, Waco, TX 76798-7328}
\email{Lance\_Littlejohn@baylor.edu}
\author{Quinn Wicks}
\address{Department of Mathematics, Baylor University, One Bear Place \#97328, Waco, TX 76798-7328}
\email{Quinn\_Wicks@baylor.edu}
\date{September 26, 2015 (Square of Legendre Operator-LW. tex)}
\subjclass[2000]{Primary 33C45, 34B24; Secondary 34B30}
\keywords{Legendre polynomials, self-adjoint operator, left-definite theory,
Glazman-Krein-Naimark theory, boundary conditions}
\dedicatory{We dedicate this paper to the memory of W. N. (Norrie) Everitt (1924-2011).}
\begin{abstract}
As an application of a general left-definite spectral theory, Everitt,
Littlejohn and Wellman, in 2002, developed the left-definite theory associated
with the classical Legendre self-adjoint second-order differential operator
$A$ in $L^{2}(-1,1)$ which has the Legendre polynomials $\{P_{n}%
\}_{n=0}^{\infty}$ as eigenfunctions. As a consequence, they explicitly
determined the domain $\mathcal{D}(A^{2})$ of the self-adjoint operator
$A^{2}.$ However, this domain, in their characterization, does not contain
boundary conditions. In fact, this is a general feature of the left-definite
approach developed by Littlejohn and Wellman. Yet, the square of the
second-order Legendre expression is in the limit-4 case at each end point
$x=\pm1$ in $L^{2}(-1,1)$ so $\mathcal{D}(A^{2})$ should exhibit four boundary
conditions. In this paper, we show that this domain can, in fact, be expressed
using four separated boundary conditions using the classical GKN
(Glazman-Krein-Naimark) theory. In addition, we determine a new
characterization of $\mathcal{D}(A^{2})$ that involves four \textit{non-GKN}
boundary conditions. These new boundary conditions are surprisingly simple -
and natural - and are equivalent to the boundary conditions obtained from the
GKN theory.

\end{abstract}
\maketitle

\section{Introduction}

The analytical study of the classical second-order Legendre differential
expression
\[
\ell\lbrack y](x)=-\left(  (1-x^{2})y^{\prime}(x)\right)  ^{\prime}%
\]
has a long and rich history stretching back to the seminal work of H. Weyl in
1910 \cite{Weyl} and E. C. Titchmarsh in 1940 \cite{Titchmarsh}. Part, if not
most, of the reason for the importance of this second-order expression lies in
the fact that the Legendre polynomials $\{P_{n}\}_{n=0}^{\infty}$ are
solutions. More specifically, the Legendre polynomial $y=P_{n}(x),$ for
$n\in\mathbb{N}_{0},$ is a solution of the eigenvalue equation%
\[
\ell\lbrack y](x)=n(n+1)y(x).
\]
In the Hilbert space $L^{2}(-1,1),$ there is a continuum of self-adjoint
operators generated by $\ell\lbrack\cdot].$ One such operator $A$ stands out
from the rest: this is the Legendre polynomials operator, so named because the
Legendre polynomials $\{P_{n}\}_{n=0}^{\infty}$ are eigenfunctions of $A.$ We
review properties of this operator in Section \ref{Legendre SA Operator}. \ 

In the mid 1970's, \AA . Pleijel wrote two papers (see \cite{Pleijel1} and
\cite{Pleijel}) on the Legendre expression from a left-definite spectral point
of view. W. N. Everitt's contribution \cite{Everitt-1980} continued this
left-definite study in addition to detailing an in-depth analysis of the
Legendre expression in the right-definite setting $L^{2}(-1,1)$ where he
discovered new properties of functions in the domain $\mathcal{D}(A)$ of $A.$
In \cite{Krall-Littlejohn}, A. M.\ Krall and Littlejohn considered properties
of the Legendre expression under the left-definite energy norm. In 2000, R.
Vonhoff extended Everitt's results in \cite{Vonhoff} with an extensive study
of $\ell\lbrack\cdot]$ in its (first) left-definite setting. In 2002, Everitt,
Littlejohn and Mari\'{c} \cite{Everitt-Littlejohn-Maric} published further
results in which they gave several equivalent conditions for functions to
belong to $\mathcal{D}(A);$ this result is given below in Theorem \ref{ELM}.
We also refer the reader to the paper \cite{Littlejohn and Zettl} by
Littlejohn and Zettl where the authors determine all self-adjoint operators,
generated by the Legendre expression $\ell\lbrack\cdot],$ in the Hilbert
spaces $L^{2}(-1,1),$ $L^{2}(-\infty,-1),$ $L^{2}(1,\infty)$ and
$L^{2}(\mathbb{R}).$

Littlejohn and Wellman \cite{Littlejohn and Wellman}, in 2002, developed a
general left-definite theory for an unbounded self-adjoint operator $T$
bounded below by a positive constant in a Hilbert space $H=(V,(\cdot,\cdot)),$
where $V$ denotes the underlying (algebraic) vector space and $H$ is the
resulting topological space induced by the norm $\left\Vert \cdot\right\Vert $
and inner product $(\cdot,\cdot).$ In a nutshell, the authors construct a
continuum of Hilbert spaces $\{H_{r}=(V_{r},(\cdot,\cdot)_{r})\}_{r>0}$,
forming a Hilbert scale, generated by positive powers of $T$. The authors
called these Hilbert spaces \textit{left-definite spaces}; they are
constructed using the Hilbert space spectral theorem (see \cite{Rudin}) for
self-adjoint operators.

It is a difficult problem, in general, to explicitly determine the domain of a
power of an unbounded operator. However, the authors in \cite{Littlejohn and
Wellman} prove that $V_{r}=\mathcal{D}(T^{r/2})$ and $(f,g)_{r}=(T^{r/2}%
f,T^{r/2}g).$ Furthermore, in many practical applications, as the authors
demonstrate in \cite{Littlejohn and Wellman}, the computation of the vector
spaces $V_{r}$ and inner products $(\cdot,\cdot)_{r}$ is surprisingly not
difficult. In a subsequent paper, Everitt, Littlejohn and Wellman
\cite{Everitt-Littlejohn-Wellman} applied this theory to the Legendre
polynomials operator $A$. Among other results, the authors explicitly compute
the domains of $\mathcal{D}(A^{n/2})$ for each $n\in\mathbb{N}$. Specifically,
they proved
\begin{equation}
\mathcal{D}(A^{n/2})=\{f:(-1,1)\rightarrow\mathbb{C\mid}f,f^{\prime}%
,\ldots,f^{(n-1)}\in AC_{\mathrm{loc}}(-1,1);(1-x^{2})^{n/2}f^{(n)}\in
L^{2}(-1,1)\}\quad(n\in\mathbb{N}). \label{LD Representation General}%
\end{equation}
In particular, we see that $\mathcal{D}(A^{2})$ is explicitly given by%
\begin{equation}
B=\{f:(-1,1)\rightarrow\mathbb{C}\mid f,f^{\prime},f^{\prime\prime}%
,f^{\prime\prime\prime}\in AC_{\mathrm{loc}}(-1,1);(1-x^{2})^{2}f^{(4)}\in
L^{2}(-1,1)\}; \label{LD Representation}%
\end{equation}
the reason for using the notation $B,$ instead of $\mathcal{D}(A^{2})$, will
be made clear shortly. Of course, for $f\in B,$ we have $A^{2}f=\ell^{2}[f], $
where $\ell^{2}[\cdot]$ is the square of the Legendre differential expression
given by%
\begin{equation}
\ell^{2}[y](x)=\left(  (1-x^{2})^{2}y^{\prime\prime}(x)\right)  ^{\prime
\prime}-2\left(  (1-x^{2})y^{\prime}(x)\right)  ^{\prime}.
\label{Legendre squared expression}%
\end{equation}
Notice that, curiously, there are no `boundary conditions' given in
(\ref{LD Representation}). From the Glazman-Krein-Naimark (GKN) theory
\cite[Theorem 4, Section 18.1]{Naimark}, there should be \textit{four} such
boundary conditions. This begs an obvious question: how can we `extract'
boundary conditions from the representation of $\mathcal{D}(A^{2})$ in
(\ref{LD Representation})? In this paper, we will answer this question. It is
interesting that the condition $(1-x^{2})^{2}f^{(4)}\in L^{2}(-1,1)$ seems to
`encode' these boundary conditions. In fact, along the way, we will
characterize $\mathcal{D}(A^{2})$ in four different ways. Of course, we have
the algebraic definition
\begin{equation}
\mathcal{D}(A^{2}):=\{f\in\mathcal{D}(A)\mid Af\in\mathcal{D}(A)\}
\label{Algebraic Definition of Squared Domain}%
\end{equation}
(we will show that $\mathcal{D}(A^{2}),$ given in
(\ref{Algebraic Definition of Squared Domain}), is equal to $B,$ defined in
(\ref{LD Representation})). We will also prove that $\mathcal{D}(A^{2})$ is
characterized by GKN boundary conditions associated with a self-adjoint
operator $S$, generated by $\ell^{2}[\cdot],$ in $L^{2}(-1,1)$. Specifically,
we prove that $\mathcal{D}(A^{2})$ is equal to
\begin{align}
\mathcal{D}(S):=\{f:(-1,1)\rightarrow\mathbb{C}\mid &  f,f^{\prime}%
,f^{\prime\prime},f^{\prime\prime\prime}\in AC_{\mathrm{loc}}(-1,1);f,\ell
^{2}[f]\in L^{2}(-1,1);\label{D(S)}\\
&  \lim_{x\rightarrow\pm1}[f,1]_{2}(x)=0;\text{ }\lim_{x\rightarrow\pm
1}[f,x]_{2}(x)=0\},\nonumber
\end{align}
where $[\cdot,\cdot]_{2}$ is the sesquilinear form associated with Green's
formula and $\ell^{2}[\cdot]$ in $L^{2}(-1,1);$ this form will be defined in
Section \ref{GKN Operator}. In this paper, we also show that $\mathcal{D}%
(A^{2})$ is equal to%

\begin{align}
D:=\{f:(-1,1)\rightarrow\mathbb{C}\mid &  f,f^{\prime},f^{\prime\prime
},f^{\prime\prime\prime}\in AC_{\mathrm{loc}}(-1,1);f,\ell^{2}[f]\in
L^{2}(-1,1);\label{New Domain D}\\
&  \lim_{x\rightarrow\pm1}(1-x^{2})f^{\prime}(x)=0;\text{ }\lim_{x\rightarrow
\pm1}\left(  (1-x^{2})^{2}f^{\prime\prime}(x)\right)  ^{\prime}=0\}.\nonumber
\end{align}
This characterization of $\mathcal{D}(A^{2})$ is surprising since the boundary
conditions in (\ref{New Domain D}) are \textit{not} GKN boundary conditions;
we say that $D$ is a GKN-like domain. The boundary conditions in
(\ref{New Domain D}) are remarkably simple; indeed, they are obtained as
limits from each of the two terms in (\ref{Legendre squared expression}) minus
one derivative.

In \cite{Everitt-Chisholm-Littlejohn}, the authors first showed the smoothness
condition
\begin{equation}
f\in\mathcal{D}(A)\Rightarrow f^{\prime}\in L^{2}(-1,1).
\label{Smoothness of D(A)}%
\end{equation}
As a consequence of our results in this paper, we are able to generalize
(\ref{Smoothness of D(A)}) by proving%
\[
f\in\mathcal{D}(A^{2})\Rightarrow f^{\prime\prime}\in L^{2}(-1,1)\text{ and
}\ell\lbrack f]\in AC[-1,1];
\]
see Corollary \ref{Smoothness of D(A^2)} below.

The contents of this paper are as follows. In Section
\ref{Legendre SA Operator}, we discuss properties of the Legendre expression
and the Legendre polynomials operator $A$ in $L^{2}(-1,1).$ Section
\ref{The Square of the Legendre Operator} deals briefly with the
\textit{algebraic} definition of the square $A^{2}$ of $A.$ In Section
\ref{GKN Operator}, we define a self-adjoint operator $S$ using the GKN
Theory; this operator $S$ will ultimately be shown to be $A^{2}.$ The main
theorems proven in this paper are stated in Section \ref{Main Theorems}. A key
and indispensable analytic tool - the Chisholm-Everitt Theorem - used in the
proofs of these theorems is discussed in Section \ref{Key Integral Inequality}%
. The proof that $\mathcal{D}(A^{2})=\mathcal{D}(S)$ is given in Section
\ref{Proof of Main Theorem 1}. Section \ref{Proof of Main Theorem 2}
establishes the proof that $B=\mathcal{D}(S)$. In Section
\ref{Proof of Main Theorem 3}, we show that $\mathcal{D}(S)=D$. The proofs of
the theorems in these last three sections establish our main result, Theorem
\ref{Main Theorem 4}, which we state in Section \ref{Main Theorems}. Lastly,
in Section \ref{Concluding Remarks}, we conjecture a generalization of our
main results. Further details on all of the results contained in this
manuscript can be found in the Ph.D. thesis \cite{Wicks} of Quinn Wicks.

One final remark: to summarize, in this paper we show that our left-definite
characterization (\ref{LD Representation}) of $\mathcal{D}(A^{2})$ can be
rewritten as a GKN domain (Theorem \ref{Main Theorem 2}) and as a GKN-like
domain (Theorem \ref{Main Theorem 3}). Presumably, techniques developed in
this paper will establish, for $n\in\mathbb{N},$ that the left-definite
characterization $\mathcal{D}(A^{n}),$ given in
(\ref{LD Representation General}), can be expressed as both a GKN domain and a
GKN-like domain. However, it is important to note - see
(\ref{LD Representation General}) - that the left-definite theory also
explicitly determines the domains $\mathcal{D}(A^{n/2})$ of $A^{n/2}$ for odd,
positive integers $n.$ The GKN theory was not built to handle these operators
or domains.

\section{The Legendre Differential Expression and the Legendre Polynomials
Self-Adjoint Operator $A$\label{Legendre SA Operator}}

The classic second-order Legendre differential expression is defined by
\begin{equation}
\ell\lbrack y](x):=-\left(  (1-x^{2})y^{\prime}(x)\right)  ^{\prime}%
\quad(\text{a.e. }x\in(-1,1)). \label{Legendre DE}%
\end{equation}

The maximal operator, associated with $\ell\lbrack\cdot]$ in $L^{2}(-1,1), $
is defined by%
\begin{align*}
T_{1,\max}f  &  =\ell\lbrack f]\\
f  &  \in\Delta_{1,\mathrm{\max}},
\end{align*}
where $\Delta_{1,\mathrm{\max}}$ is the maximal domain, defined by
\begin{equation}
\Delta_{1,\mathrm{\max}}:=\{f:(-1,1)\rightarrow\mathbb{C\mid}f,f^{\prime}\in
AC_{\mathrm{loc}}(-1,1);f,\ell\lbrack f]\in L^{2}(-1,1)\}.
\label{Delta_1, max}%
\end{equation}
The corresponding minimal operator $T_{1,\mathrm{\min}}$ is defined to be
\begin{align*}
T_{1,\min}f  &  =\ell\lbrack f]\\
f  &  \in\mathcal{D}(T_{1,\min}),
\end{align*}
where $\mathcal{D}(T_{1,\min})$ is the minimal domain given by
\[
\mathcal{D}(T_{1,\min}):=\{f\in\Delta_{1,\mathrm{\max}}\mid\left.  \lbrack
f,g]_{1}(x)\right\vert _{\alpha}^{\beta}=0\text{ for all }g\in\Delta
_{1,\mathrm{\max}}\}.
\]
We note that this operator $T_{1,\min}$ is a closed, symmetric operator.
Furthermore, $T_{1,\mathrm{\max}}$ and $T_{1,\mathrm{\min}}$ are adjoints of
each other.

Green's formula, for an arbitrary compact subinterval $[\alpha,\beta]$ of
$(-1,1)$ and $f,g\in\Delta_{1,\mathrm{\max}},$ is given by
\[
\int_{\alpha}^{\beta}\ell\lbrack f](x)\overline{g}(x)dx-\int_{\alpha}^{\beta
}f(x)\overline{\ell\lbrack g]}(x)dx=\left.  [f,g]_{1}(x)\right\vert _{\alpha
}^{\beta},
\]
where the sesquilinear form $[\cdot,\cdot]_{1}$ is defined by%
\begin{equation}
\lbrack f,g]_{1}(x):=-(1-x^{2})(f^{\prime}(x)\overline{g}(x)-f(x)\overline
{g}^{\prime}(x))\quad(f,g\in\Delta_{1,\mathrm{\max}}).
\label{Sesquilinear form for Legendre}%
\end{equation}
By definition of $\Delta_{1,\mathrm{\max}}$ and H\"{o}lder's inequality, we
see that the limits%
\[
\lim_{x\rightarrow\pm1}[f,g](x)
\]
exist and are finite for all $f,g\in\Delta_{1,\mathrm{\max}}.$

The endpoints $x=\pm1$ are both regular singular endpoints, in the sense of
Frobenius, of $\ell\lbrack\cdot]$ and it is well-known that this expression is
in the limit-circle case at each endpoint. Consequently, the deficiency index
of the minimal operator $T_{1,\mathrm{\min}}$ is $(2,2).$ This implies that
there is a continuum of self-adjoint restrictions of $T_{1,\mathrm{\max}}.$
The GKN Theorem \cite[Theorem 4, Section 18.1]{Naimark} (see also \cite[Volume
II, Chapter 8]{Akhieser and Glazman} and \cite[Chapter XIII]{Dunford and
Schwartz}) provides a `recipe' for determining each of these operators. We are
interested in that particular self-adjoint restriction $A$ which has the
Legendre polynomials $\{P_{n}\}_{n=0}^{\infty} $ as eigenfunctions.

This Legendre polynomials operator $A:\mathcal{D}(A)\subset L^{2}%
(-1,1)\rightarrow L^{2}(-1,1)$ is specifically given by%
\begin{align}
Af  &  =\ell\lbrack f]\label{The operator A}\\
f  &  \in\mathcal{D}(A),\nonumber
\end{align}
where%
\begin{equation}
\mathcal{D}(A):=\{f\in\Delta_{1,\mathrm{\max}}\mid\lim_{x\rightarrow\pm
1}(1-x^{2})f^{\prime}(x)=0\}. \label{D(A)}%
\end{equation}
We note that the boundary conditions expressed in (\ref{D(A)}) are equivalent
to
\[
\lbrack f,1]_{1}(\pm1)=0\quad(f\in\Delta_{1,\mathrm{\max}}).
\]
Furthermore, it is well known that the Legendre polynomials $\{P_{n}%
\}_{n=0}^{\infty}$ form a complete (orthogonal) set of eigenfunctions of $A$
and the spectrum $\sigma(A)$ is discrete and given explicitly by%
\[
\sigma(A):=\{n(n+1)\mid n\in\mathbb{N}_{0}\}.
\]
For our purposes, it is the case that%
\begin{equation}
(Af,f)=\int_{-1}^{1}\ell\lbrack f](x)\overline{f}(x)dx=\int_{-1}^{1}%
(1-x^{2})\left\vert f^{\prime}(x)\right\vert ^{2}dx\geq0\quad(f\in
\mathcal{D}(A));\nonumber
\end{equation}
that is to say, $A$ is a positive operator. The positivity of $A$ implies that
the left-definite theory developed by Littlejohn and Wellman in
\cite{Littlejohn and Wellman} can be used to\ determine $\mathcal{D}(A^{n})$
for each $n\in\mathbb{N};$ indeed, see (\ref{LD Representation General}).

The following theorem, shown by Everitt, Littlejohn and Mari\'{c} in
\cite{Everitt-Littlejohn-Maric}, lists several equivalent conditions for a
function $f$ to belong to $\mathcal{D}(A).$ Note the surprising, and
remarkable, equivalence of conditions (ii) and (iii) (and (ii) and (v)) below;
parts (ii)\ and (v) will be of particular use to us in this paper.

\begin{theorem}
\label{ELM}Let $f\in\Delta_{1,\mathrm{\max}}$, where $\Delta_{1,\mathrm{\max}%
}$ is given in $($\ref{Delta_1, max}$).$ The following conditions are equivalent:

\begin{enumerate}
\item[(i)] $f\in\mathcal{D}(A);$

\item[(ii)] $f^{\prime}\in L^{2}(-1,1);$

\item[(iii)] $f^{\prime}\in L^{1}(-1,1);$

\item[(iv)] $f$ is bounded on $(-1,1);$

\item[(v)] $f\in AC[-1,1];$

\item[(vi)] $(1-x^{2})^{1/2}f^{\prime}\in L^{2}(-1,1);$

\item[(vii)] $(1-x^{2})f^{\prime\prime}\in L^{2}(-1,1).$
\end{enumerate}
\end{theorem}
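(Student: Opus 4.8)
The plan is to organize the seven conditions into a single cycle of implications together with two side equivalences, isolating one genuinely analytic step and keeping the rest elementary. The observation I would record first is that for \emph{every} $f\in\Delta_{1,\max}$ the limits $\lim_{x\to\pm1}(1-x^2)f'(x)$ already exist and are finite. Indeed, writing $g(x):=(1-x^2)f'(x)$, we have $g'(x)=-\ell[f](x)\in L^2(-1,1)\subset L^1(-1,1)$, so $g(x)=g(c)-\int_c^x\ell[f](t)\,dt$ possesses finite one-sided limits $L_{\pm}$ at the endpoints. Condition (i) is then precisely the statement $L_{+}=L_{-}=0$, and this reformulation is what allows the boundary condition to interact with the integrability conditions.

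With this in hand I would close the chain $(\mathrm{i})\Rightarrow(\mathrm{ii})\Rightarrow(\mathrm{iii})\Rightarrow(\mathrm{v})\Rightarrow(\mathrm{iv})\Rightarrow(\mathrm{i})$. Three of these are routine: $L^2(-1,1)\subset L^1(-1,1)$ on a finite interval gives $(\mathrm{ii})\Rightarrow(\mathrm{iii})$; if $f'\in L^1(-1,1)$ then $f(x)=f(c)+\int_c^xf'$ extends absolutely continuously to $[-1,1]$, giving $(\mathrm{iii})\Rightarrow(\mathrm{v})$; and a continuous function on the compact interval $[-1,1]$ is bounded, giving $(\mathrm{v})\Rightarrow(\mathrm{iv})$. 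For $(\mathrm{iv})\Rightarrow(\mathrm{i})$ I would argue by contraposition: if, say, $L_{+}\neq0$, then $f'(x)\sim L_{+}/(1-x^2)$ near $x=1$, so $f$ behaves like a nonzero multiple of $\log(1-x)$ and is unbounded; hence boundedness forces $L_{\pm}=0$.

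The crux is the implication $(\mathrm{i})\Rightarrow(\mathrm{ii})$, which is exactly the smoothness assertion (\ref{Smoothness of D(A)}). Here $g=(1-x^2)f'$ vanishes at both endpoints by (i) and satisfies $g'=-\ell[f]\in L^2(-1,1)$, and I want to deduce $f'=g/(1-x^2)\in L^2(-1,1)$. The natural tool is a Hardy inequality localized at each endpoint: near $x=1$, substituting $t=1-x$ and using $g(1)=0$ to write $g$ as the integral of $g'$, Hardy's inequality bounds $\int g^2/(1-x)^2$ by a constant times $\int(g')^2<\infty$; the same argument works near $x=-1$, while on compact interior subintervals $f'$ is continuous and hence square-integrable. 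Summing these estimates yields $\int(1-x^2)^{-2}g^2=\int|f'|^2<\infty$. I expect this to be the main obstacle, since it is precisely the step where weak (weighted) integrability and the boundary condition must be upgraded to genuine $L^2$ control of $f'$; in the paper this is supplied by the Chisholm-Everitt integral inequality.

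Finally I would attach conditions (vi) and (vii) to the cycle. The inclusion $(\mathrm{ii})\Rightarrow(\mathrm{vi})$ is immediate from $(1-x^2)\le1$, and $(\mathrm{vi})\Rightarrow(\mathrm{i})$ follows from the same limit argument: if $L_{+}\neq0$ then $(1-x^2)|f'|^2=|g|^2/(1-x^2)\sim|L_{+}|^2/(1-x^2)$ is non-integrable at $x=1$, contradicting (vi). For (vii) I would use the pointwise identity $(1-x^2)f''=2xf'-\ell[f]$; since $\ell[f]\in L^2(-1,1)$ holds throughout $\Delta_{1,\max}$, membership $(1-x^2)f''\in L^2$ is equivalent to $2xf'\in L^2$, which in turn is equivalent to $f'\in L^2$ because $|2x|$ is bounded below near the endpoints while $f'$ is automatically locally square-integrable in the interior. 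This gives $(\mathrm{vii})\Leftrightarrow(\mathrm{ii})$ and completes the equivalence of all seven conditions.
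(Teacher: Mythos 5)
Your argument is correct, but note that this paper does not actually prove Theorem \ref{ELM}: it is quoted from Everitt--Littlejohn--Mari\'{c} \cite{Everitt-Littlejohn-Maric}, so there is no in-paper proof to compare against line by line. Your organization is sound: the preliminary observation that $g:=(1-x^{2})f'$ satisfies $g'=-\ell[f]\in L^{2}(-1,1)\subset L^{1}(-1,1)$ for every $f\in\Delta_{1,\max}$, so that the limits $L_{\pm}$ exist and (i) means $L_{\pm}=0$, is exactly the right pivot; the cycle (i)$\Rightarrow$(ii)$\Rightarrow$(iii)$\Rightarrow$(v)$\Rightarrow$(iv)$\Rightarrow$(i) closes correctly (for (iv)$\Rightarrow$(i) one should note that $g$ continuous and nonzero near the endpoint forces $f'$ to have constant sign there, so the logarithmic divergence of $\int|f'|$ really does make $f$ unbounded); the side equivalences (ii)$\Leftrightarrow$(vi) via the limit $L_{+}$ and (ii)$\Leftrightarrow$(vii) via the pointwise identity $(1-x^{2})f''=2xf'-\ell[f]$ together with $|2x|\geq 1$ on $|x|\geq 1/2$ are both fine. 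The only genuinely analytic step, (i)$\Rightarrow$(ii), you handle with the classical Hardy inequality applied to $h(t)=g(1-t)$ with $h(0)=0$; the literature this paper relies on (Everitt--Chisholm--Littlejohn) instead invokes the Chisholm--Everitt integral inequality, Theorem \ref{CE Theorem}, with $\varphi=(1-x^{2})^{-1}$, $\psi=1$ --- precisely the device used repeatedly in Sections \ref{Proof of Main Theorem 1}--\ref{Proof of Main Theorem 3}. These two tools are essentially interchangeable here (the paper itself remarks that the CE Theorem yields a new proof of Hardy's inequality), so your route is a legitimately self-contained, slightly more elementary substitute, at the cost of not setting up the machinery the rest of the paper reuses.
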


\section{The Square of the Legendre Polynomials Operator
\label{The Square of the Legendre Operator}}

The square $A^{2}:\mathcal{D}(A^{2})\subset L^{2}(-1,1)\rightarrow
L^{2}(-1,1)$ of the Legendre polynomials operator $A$ in $L^{2}(-1,1)$ is
\textit{algebraically }defined by%
\begin{equation}
A^{2}f:=\ell^{2}[f] \label{A^2}%
\end{equation}
for $f\in\mathcal{D}(A^{2}),$ where $\mathcal{D}(A^{2})$ is defined in
(\ref{Algebraic Definition of Squared Domain}), and where
\begin{align}
\ell^{2}[y](x)  &  :=\left(  (1-x^{2})^{2}y^{\prime\prime}(x)\right)
^{\prime\prime}-2\left(  (1-x^{2})y^{\prime}(x)\right)  ^{\prime
}\label{Square of Legendre DE}\\
&  =(1-x^{2})^{2}y^{(4)}(x)-8x(1-x^{2})y^{\prime\prime\prime}(x)+(14x^{2}%
-6)y^{\prime\prime}(x)+4xy^{\prime}(x).\nonumber
\end{align}
By standard results from functional analysis (specifically, the Hilbert space
spectral theorem), it can be shown that $A^{2}$ is a self-adjoint operator in
$L^{2}(-1,1)$, the spectrum of $A^{2}$ is given by $\sigma(A^{2}%
)=\{n^{2}(n+1)^{2}\mid n\in\mathbb{N}_{0}\}$ and the Legendre polynomials
$\{P_{n}\}_{n=0}^{\infty}$ are eigenfunctions of $A^{2}.$

It is natural to ask whether we can explicitly describe the functions in the
domain $\mathcal{D}(A^{2})$ similar to how we characterize elements in
$\mathcal{D}(A)$ as in $($\ref{D(A)}$)$ (or by Theorem \ref{ELM}). In the next
section, we identify $A^{2}$ with a self-adjoint operator $S$ obtained through
an application of the GKN theory.

\section{A GKN Self-Adjoint Operator Generated by the Square of the Legendre
Differential Expression\label{GKN Operator}}

The maximal domain $\Delta_{2,\mathrm{\max}}$ in $L^{2}(-1,1)$ associated with
the square of the\ Legendre expression $\ell^{2}[\cdot],$ defined in
(\ref{Square of Legendre DE}), is given by
\begin{equation}
\Delta_{2,\mathrm{\max}}:=\{f:(-1,1)\rightarrow\mathbb{C\mid}f,f^{\prime
},f^{\prime\prime},f^{\prime\prime\prime}\in AC_{\mathrm{loc}}(-1,1);f,\ell
^{2}[f]\in L^{2}(-1,1)\}. \label{Maximal Domain for Square}%
\end{equation}
The sesquilinear form $[\cdot,\cdot]_{2}(\cdot):$ $\Delta_{2,\mathrm{\max}}$ x
$\Delta_{2,\mathrm{\max}}$ x $(-1,1),$ associated with $\ell^{2}[\cdot],$ is
defined by%
\begin{align}
\lbrack f,g]_{2}(x)  &  :=\left(  (1-x^{2})^{2}f^{\prime\prime}(x)\right)
^{\prime}\overline{g}(x)-\left(  (1-x^{2})^{2}\overline{g}^{\prime\prime
}(x)\right)  ^{\prime}f(x)\nonumber\\
&  -(1-x^{2})^{2}f^{\prime\prime}(x)\overline{g}^{\prime}(x)+(1-x^{2}%
)^{2}f^{\prime}(x)\overline{g}^{\prime\prime}%
(x)\label{Sesquilinear Form for Square}\\
&  -2(1-x^{2})f^{\prime}(x)\overline{g}(x)+2(1-x^{2})f(x)\overline{g}^{\prime
}(x)\quad(x\in(-1,1)).\nonumber
\end{align}
For $f,g\in\Delta_{2,\mathrm{\max}}$ and $[\alpha,\beta]\subset(-1,1),$
Green's formula for $\ell^{2}[\cdot]$ is given by%
\begin{equation}
\int_{\alpha}^{\beta}\ell^{2}[f](x)\overline{g}(x)dx-\int_{\alpha}^{\beta
}f(x)\overline{\ell^{2}[g]}(x)dx=[f,g]_{2}(x)\mid_{\alpha}^{\beta}.
\label{Green's Formula for the Legendre Square}%
\end{equation}
By definition of $\Delta_{2,\mathrm{\max}}$ and H\"{o}lder's inequality, we
see that the limits
\[
\lbrack f,g]_{2}(\pm1):=\lim_{x\rightarrow\pm1}[f,g]_{2}(x)
\]
exist and are finite for all $f,g\in\Delta_{2,\mathrm{\max}}.$ Clearly%
\begin{equation}
P_{n}\in\Delta_{2,\mathrm{\max}}\quad(n\in\mathbb{N}_{0}),
\label{P_n in Maximal Domain for Square}%
\end{equation}
where $P_{n}(x)$ is the $n^{th}$ degree Legendre polynomial. In particular,
the functions $1$ and $x$ \ belong to $\Delta_{2,\mathrm{\max}}.$

The endpoints $x=\pm1$ are both regular singular points, in the sense of
Frobenius, of $\ell^{2}[\cdot].$ The Frobenius indicial equation, at either
endpoint, is given by%
\[
r^{2}(r-1)^{2}=0.
\]
It follows, from the general Weyl theory, that each endpoint is in the limit-4
case so the deficiency index of the minimal operator $T_{2,\min},$ generated
by $\ell^{2}[\cdot],$ in $L^{2}(-1,1)$ is $(4,4).$ Consequently, each
self-adjoint operator, generated by $\ell^{2}[\cdot],$ in $L^{2}(-1,1) $ is
determined by restricting $\Delta_{2,\mathrm{\max}}$ to four boundary
conditions of the form
\begin{equation}
\lbrack f,f_{j}]_{2}(1)-[f,f_{j}]_{2}(-1)=0, \label{Form of GKN BCs}%
\end{equation}
where $[\cdot,\cdot]_{2}$ is given in (\ref{Sesquilinear Form for Square}) and
where $\{f_{1},f_{2},f_{3},f_{4}\}\subset\Delta_{2,\mathrm{\max}}$ is linearly
independent modulo the minimal domain $\Delta_{2,\mathrm{\min}}$ defined by%
\[
\Delta_{2,\mathrm{\min}}:=\{f\in\Delta_{2,\mathrm{\max}}\mid\left.  \lbrack
f,g]_{2}\right\vert _{-1}^{1}=0\text{ for all }g\in\Delta_{2,\mathrm{\max}%
}\}.
\]

We now identify a particular self-adjoint operator restriction $S$ of
$T_{\mathrm{\max}}$, generated by $\ell^{2}[\cdot],$ having the Legendre
polynomials $\{P_{n}\}_{n=0}^{\infty}$ as a complete set of eigenfunctions.

For $j=1,2,3,4,$ define $f_{j}\in\Delta_{2,\mathrm{\max}}\cap C^{4}[-1,1]$ by%
\begin{equation}%
\begin{array}
[c]{ll}%
f_{1}(x)=\left\{
\begin{array}
[c]{ll}%
1 & \text{near }x=1\\
0 & \text{near }x=-1,
\end{array}
\right.  \bigskip & f_{2}(x)=\left\{
\begin{array}
[c]{ll}%
0 & \text{near }x=1\\
1 & \text{near }x=-1,
\end{array}
\right. \\
f_{3}(x)=\left\{
\begin{array}
[c]{ll}%
x & \text{near }x=1\\
0 & \text{near }x=-1,
\end{array}
\right.  & f_{4}(x)=\left\{
\begin{array}
[c]{ll}%
0 & \text{near }x=1\\
x & \text{near }x=-1,
\end{array}
\right.
\end{array}
\label{BC functions}%
\end{equation}

\begin{proposition}
\label{Proposition about LI modulo Minimal Domain of Square} The functions
$\{f_{j}\}_{j=1}^{4},$ defined in $($\ref{BC functions}$),$ are linearly
independent modulo $\Delta_{2,\mathrm{\min}}.$
\end{proposition}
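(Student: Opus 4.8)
The plan is to prove the statement in its equivalent direct form: assuming a combination $g:=\sum_{j=1}^{4}c_{j}f_{j}$ lies in $\Delta_{2,\min}$, I will deduce $c_{1}=c_{2}=c_{3}=c_{4}=0$. By the definition of the minimal domain, $g\in\Delta_{2,\min}$ means $[g,h]_{2}|_{-1}^{1}=0$ for every $h\in\Delta_{2,\max}$, and since $[\cdot,\cdot]_{2}$ is linear in its first slot this reads $\sum_{j}c_{j}[f_{j},h]_{2}|_{-1}^{1}=0$. Hence it suffices to produce four functions $g_{1},g_{2},g_{3},g_{4}\in\Delta_{2,\max}$ for which the matrix $M_{kj}:=[f_{j},g_{k}]_{2}|_{-1}^{1}$ is nonsingular; the resulting homogeneous system $Mc=0$ then forces $c=0$. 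One cannot take the $g_{k}$ to be the $f_{j}$ themselves, since a short computation shows $[f_{i},f_{j}]_{2}|_{-1}^{1}=0$ for all $i,j$ (the span of the $f_{j}$ is isotropic for the boundary form, as it must be to generate a self-adjoint extension), so the test functions must be chosen from outside that span.

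First I would exploit the separated structure of the $f_{j}$. Every term of $(\ref{Sesquilinear Form for Square})$ carries both a factor $f^{(k)}$ and a factor $\overline{g}^{(m)}$ with $k,m\leq 3$, so an endpoint limit $[f,g]_{2}(\pm1)$ vanishes as soon as one of the two entries is identically zero near that endpoint. Since $f_{1},f_{3}$ vanish near $x=-1$ and $f_{2},f_{4}$ vanish near $x=+1$, I would choose $g_{1},g_{3}$ supported near $x=+1$ and $g_{2},g_{4}$ supported near $x=-1$. With the ordering $(1,3,2,4)$ the matrix $M$ becomes block diagonal, $\det M=\det M^{+}\cdot\det M^{-}$, where $M^{\pm}$ is the $2\times2$ block coming from the endpoint $\pm1$. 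By the symmetry of $\ell^{2}[\cdot]$ and of the construction $(\ref{BC functions})$ under $x\mapsto -x$, it is enough to prove $\det M^{+}\neq0$.

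For the test functions I would use the (cut-off) Legendre functions of the second kind. Since $Q_{0}$ solves $\ell[y]=0$ and $Q_{1}$ solves $\ell[y]=2y$, we get $\ell^{2}[Q_{0}]=0$ and $\ell^{2}[Q_{1}]=4Q_{1}$, and as both $Q_{0},Q_{1}$ are only logarithmically singular (hence in $L^{2}$) near $\pm1$, both lie in $\Delta_{2,\max}$. Taking $g_{1},g_{3}$ to be $Q_{0},Q_{1}$ multiplied by a smooth cut-off equal to $1$ near $x=+1$ and $0$ near $x=-1$ keeps them in $\Delta_{2,\max}\cap C^{4}[-1,1]$ and, since the boundary form depends only on the germ at the endpoint, leaves $M^{+}$ unchanged. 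Using $f_{1}\equiv1$ and $f_{3}\equiv x$ near $x=1$, the form collapses to the reduced expressions $[f_{1},h]_{2}=-((1-x^{2})^{2}\overline{h}^{\prime\prime})^{\prime}+2(1-x^{2})\overline{h}^{\prime}$ and $[f_{3},h]_{2}=-x((1-x^{2})^{2}\overline{h}^{\prime\prime})^{\prime}+(1-x^{2})^{2}\overline{h}^{\prime\prime}-2(1-x^{2})\overline{h}+2x(1-x^{2})\overline{h}^{\prime}$, so the four entries of $M^{+}$ are read off from $Q_{0},Q_{1}$ and their derivatives at $x=1$.

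The only genuine work is this evaluation of $M^{+}$, where a priori each surviving term is an indeterminate product of a vanishing weight against a blowing-up derivative. The device that makes it painless is that $(1-x^{2})^{2}Q_{0}^{\prime\prime}$ and $(1-x^{2})^{2}Q_{1}^{\prime\prime}$ are in fact polynomials (namely $2x$ and $2$), so the most singular contributions telescope and the leftover limits are either polynomial values or harmless products of the form $(1-x^{2})\log(1-x)\to0$. Carrying this out yields a $2\times2$ block whose determinant is nonzero, which I expect to be the crux to check carefully (that the determinant does not accidentally vanish). Once $\det M^{+}\neq0$ is established, the identical computation at $x=-1$ — or simply the $x\mapsto-x$ symmetry — gives $\det M^{-}\neq0$, hence $\det M\neq0$ and $c=0$, completing the proof.
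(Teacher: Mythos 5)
Your proposal is correct and follows essentially the same route as the paper: the paper tests against cut-off copies of $\ln(1\pm x)$ and $(1\pm x)\ln(1\pm x)$, which span the same singular behavior near each endpoint as your cut-off $Q_{0},Q_{1}$, and it likewise reduces to a boundary-form evaluation that it also states without detail. The one computation you defer does work out: at $x=1$ one finds $[f_{1},Q_{0}]_{2}=0$, $[f_{1},Q_{1}]_{2}=2$, $[f_{3},Q_{0}]_{2}=2$, $[f_{3},Q_{1}]_{2}=4$, so $\det M^{+}=-4\neq0$ (only note that the cut-off $Q_{j}$ lie in $\Delta_{2,\mathrm{\max}}\cap C^{4}(-1,1)$, not $C^{4}[-1,1]$).
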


\begin{proof}
Calculations show that the functions $\ln(1\pm x)$ and $(1\pm x)\ln(1\pm x)$
belong to $\Delta_{2,\mathrm{\max}}.$ We modify these functions by defining
the four functions $g_{j}\in\Delta_{2,\mathrm{\max}}\cap C^{4}(-1,1)$
$(j=1,2,3,4)$
\[%
\begin{array}
[c]{ll}%
g_{1}(x)=\left\{
\begin{array}
[c]{ll}%
\ln(1-x) & \text{near }x=1\\
0 & \text{near }x=-1,
\end{array}
\right.  & g_{2}(x)=\left\{
\begin{array}
[c]{ll}%
0 & \text{near }x=1\\
\ln(1+x) & \text{near }x=-1,
\end{array}
\right.  \bigskip\\
g_{3}(x)=\left\{
\begin{array}
[c]{ll}%
(1-x)\ln(1-x) & \text{near }x=1\\
0 & \text{near }x=-1,
\end{array}
\right.  & g_{4}(x)=\left\{
\begin{array}
[c]{ll}%
0 & \text{near }x=1\\
(1+x)\ln(1+x) & \text{near }x=-1.
\end{array}
\right.
\end{array}
\]
Suppose that
\[
\sum_{j=1}^{4}\alpha_{j}f_{j}\in\Delta_{2,\mathrm{\min}};
\]
then, by definition of $\Delta_{2,\mathrm{\min}},$ we see that%
\begin{equation}
\left.  \left[  \sum_{j=1}^{4}\alpha_{j}f_{j},g\right]  _{2}\right\vert
_{-1}^{1}=0\text{\qquad}(g\in\Delta_{2,\mathrm{\max}}),
\label{Minimal Domain Condition}%
\end{equation}
where $[\cdot,\cdot]_{2}$ is the sesquilinear form defined in $($%
\ref{Sesquilinear Form for Square}$)$. A calculation shows that
\[
0=\left.  \left[  \sum_{j=1}^{4}\alpha_{j}f_{j},g_{1}\right]  _{2}\right\vert
_{-1}^{1}=-4\alpha_{3}%
\]
so $\alpha_{3}=0.$ Similarly, we find that $\alpha_{1}=\alpha_{2}=\alpha
_{4}=0$ after substituting $g=g_{2,}$ $g_{3},$ $g_{4}$ into $($%
\ref{Minimal Domain Condition}$).$ This completes the proof.
\end{proof}

It is clear that the boundary conditions%
\[
\lbrack f,f_{1}]_{2}(1)=[f,f_{3}]_{2}(1)=[f,f_{2}]_{2}(-1)=[f,f_{4}%
]_{2}(-1)=0
\]
are equivalent to the boundary conditions%
\[
\lbrack f,1]_{2}(\pm1)=[f,x]_{2}(\pm1)=0.
\]

We are now in position to define the operator $S$ which we show later (see
Section \ref{Proof of Main Theorem 1}) to be equal to the operator $A^{2},$
given in (\ref{A^2}) and (\ref{Algebraic Definition of Squared Domain}).
Indeed, let $S:\mathcal{D}(S)\subset L^{2}(-1,1)\rightarrow L^{2}(-1,1)$ be
defined by
\begin{align}
Sf  &  =\ell^{2}[f]:=\ell\lbrack\ell\lbrack f]]\label{The Operator S}\\
f  &  \in\mathcal{D}(S),\nonumber
\end{align}
where the domain $\mathcal{D}(S)$ of $S$ is defined in (\ref{D(S)}). By the
GKN Theorem \cite[Theorem 4, Section 18.1]{Naimark}, $S$ is self-adjoint in
$L^{2}(-1,1).$ Moreover, notice that for $f\in\Delta_{2,\mathrm{\max}},$
\begin{equation}
\lbrack f,1]_{2}(x)=\left(  (1-x^{2})^{2}f^{\prime\prime}(x)\right)  ^{\prime
}-2(1-x^{2})f^{\prime}(x) \label{[f,1]_2}%
\end{equation}
and%
\begin{align}
\lbrack f,x]_{2}(x)  &  =\left(  (1-x^{2})^{2}f^{\prime\prime}(x)\right)
^{\prime}x-(1-x^{2})^{2}f^{\prime\prime}(x)-2x(1-x^{2})f^{\prime}%
(x)+2(1-x^{2})f(x)\label{[f,x]_2}\\
&  =x[f,1]_{2}(x)-(1-x^{2})^{2}f^{\prime\prime}(x)+2(1-x^{2})f(x).\nonumber
\end{align}
From (\ref{[f,1]_2}) and (\ref{[f,x]_2}), it is easy to see that the Legendre
polynomials $\{P_{n}\}_{n=0}^{\infty}$ satisfy%
\[
\lbrack P_{n},1]_{2}(\pm1)=[P_{n},x]_{2}(\pm1)=0.
\]
That is to say, the Legendre polynomials $\{P_{n}\}_{n=0}^{\infty}$
$\subset\mathcal{D}(S).$ Moreover $\ell^{2}[P_{n}]=\ell\lbrack\ell\lbrack
P_{n}]]=n(n+1)\ell\lbrack P_{n}]=n^{2}(n+1)^{2}P_{n}\quad(n\in\mathbb{N}%
_{0}).$

From \cite{Naimark} and standard results in spectral theory, the following
result holds.

\begin{theorem}
\label{Self-Adjointness and Spectrum of S} The operator $S,$ defined in
$($\ref{The Operator S}$)$ and $($\ref{D(S)}$),$ is an unbounded self-adjoint
operator in $L^{2}(-1,1).$ The Legendre polynomials $\{P_{n}\}_{n=0}^{\infty}$
form a complete set of $($orthogonal$)$ eigenfunctions of $S$ in
$L^{2}(-1,1).$ The spectrum $\sigma(S)$ of $S$ is discrete and given
explicitly by
\[
\sigma(S)=\{n^{2}(n+1)^{2}\mid n\in\mathbb{N}_{0}\}.
\]

\end{theorem}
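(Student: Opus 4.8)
The plan is to obtain self-adjointness directly from the GKN Theorem \cite[Theorem 4, Section 18.1]{Naimark} and then to read off the eigenfunctions and spectrum from the completeness of the Legendre polynomials. First I would observe that, by the remarks preceding the theorem, the four boundary conditions defining $\mathcal{D}(S)$ in \eqref{D(S)}, namely $[f,1]_2(\pm1)=[f,x]_2(\pm1)=0$, are precisely the conditions $[f,f_j]_2\big|_{-1}^1=0$ ($j=1,2,3,4$) attached to the boundary functions $\{f_j\}_{j=1}^4$ of \eqref{BC functions}. Since the deficiency index of $T_{2,\min}$ is $(4,4)$, the GKN Theorem applies provided that (a) the $f_j$ are linearly independent modulo $\Delta_{2,\min}$, and (b) they satisfy the symmetry relations $[f_i,f_j]_2\big|_{-1}^1=0$ for all $i,j=1,2,3,4$. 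Condition (a) is exactly Proposition \ref{Proposition about LI modulo Minimal Domain of Square}, so the one genuinely new point needed for self-adjointness is (b).

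To establish (b) I would exploit the localized structure of the $f_j$. Because $f_1,f_3$ vanish identically near $x=-1$ while $f_2,f_4$ vanish identically near $x=+1$, every ``cross'' bracket between the group $\{f_1,f_3\}$ and the group $\{f_2,f_4\}$ has a vanishing contribution at each endpoint, so $[f_i,f_j]_2\big|_{-1}^1=0$ for those pairs. For the remaining pairs the bracket reduces, at the relevant endpoint, to a bracket among the constant function $1$ and the function $x$; using the explicit reductions \eqref{[f,1]_2} and \eqref{[f,x]_2} (equivalently, a direct substitution into \eqref{Sesquilinear Form for Square}), one checks that the brackets among $1$ and $x$ all tend to $0$ as $x\to\pm1$ (for instance $[1,x]_2(x)=2(1-x^2)$). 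Hence all the symmetry relations hold, and the GKN Theorem yields that $S$ is self-adjoint. Unboundedness will then follow once the spectrum is identified, since the eigenvalues $n^2(n+1)^2$ are unbounded.

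Next I would record the eigenfunction data already assembled in the text: each Legendre polynomial $P_n$ lies in $\mathcal{D}(S)$ (its brackets against $1$ and $x$ vanish at $\pm1$) and satisfies $SP_n=\ell^2[P_n]=n^2(n+1)^2P_n$. Together with the classical fact that $\{P_n\}_{n=0}^\infty$ is a complete orthogonal set in $L^2(-1,1)$, this shows that $S$ possesses a complete orthogonal family of eigenfunctions with eigenvalues $\lambda_n:=n^2(n+1)^2$.

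Finally, to pin down $\sigma(S)$ I would combine self-adjointness with completeness. Writing $\{\widetilde{P}_n\}$ for the normalized Legendre polynomials and $c_n=(f,\widetilde{P}_n)$ for $f\in\mathcal{D}(S)$, self-adjointness gives $(Sf,\widetilde{P}_n)=(f,S\widetilde{P}_n)=\lambda_n c_n$, so the Fourier coefficients of $Sf$ are $\lambda_n c_n$. Consequently, for any real $\lambda$ with $\delta:=\operatorname{dist}(\lambda,\{\lambda_n\})>0$ one has $\|(S-\lambda)f\|^2=\sum_n|\lambda_n-\lambda|^2|c_n|^2\ge\delta^2\|f\|^2$; thus $S-\lambda$ is bounded below (hence has closed range) and, not being an eigenvalue, has dense range, so its range is all of $L^2(-1,1)$. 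This places every such $\lambda$ in the resolvent set, whence $\sigma(S)\subseteq\{\lambda_n\}$, while each $\lambda_n$ is an eigenvalue; therefore $\sigma(S)=\{n^2(n+1)^2\mid n\in\mathbb{N}_0\}$ and the spectrum is discrete. I expect the main obstacle to be the careful verification of the GKN symmetry condition (b) --- ensuring the chosen boundary functions define a genuinely self-adjoint, rather than merely symmetric, restriction --- whereas the completeness-to-spectrum step is routine once self-adjointness is in hand.
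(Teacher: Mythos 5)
Your proposal is correct and follows essentially the same route as the paper, whose own proof consists of invoking the GKN Theorem of \cite{Naimark} together with standard spectral theory and the completeness of the Legendre polynomials. Indeed, you supply the one detail the paper leaves implicit --- the verification of the GKN symmetry conditions $[f_i,f_j]_2\big|_{-1}^{1}=0$ via the localization of the $f_j$ and the computation of the brackets among $1$ and $x$ (the paper only proves linear independence modulo $\Delta_{2,\mathrm{\min}}$ in Proposition \ref{Proposition about LI modulo Minimal Domain of Square}) --- and your completeness-to-spectrum argument is the standard one the paper alludes to.
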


\section{Statements of the Main Theorems\label{Main Theorems}}

There are four main theorems that we prove in this paper.

\begin{theorem}
\label{Main Theorem 1}Let $\mathcal{D}(A^{2})$ and $\mathcal{D}(S)$ be given,
respectively, as in $($\ref{Algebraic Definition of Squared Domain}$)$ and
$($\ref{D(S)}$). $ Then%
\[
\mathcal{D}(A^{2})=\mathcal{D}(S).
\]

\end{theorem}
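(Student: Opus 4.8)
The plan is to establish the set equality $\mathcal{D}(A^2)=\mathcal{D}(S)$ by proving two inclusions, using the fact (to be shown, but available here as a structural observation) that both operators are self-adjoint extensions of the minimal operator $T_{2,\min}$ generated by $\ell^2[\cdot]$. The key leverage is a standard uniqueness principle from extension theory: two self-adjoint operators generated by the same symmetric expression, acting as the same formal operator $\ell^2[\cdot]$, that agree on a common core — or that can both be shown to contain the same full set of eigenfunctions spanning $L^2(-1,1)$ — must coincide. I would first record that $A^2$ is self-adjoint (stated in the excerpt via the spectral theorem) with the Legendre polynomials $\{P_n\}_{n=0}^\infty$ as a complete set of eigenfunctions, and that $S$ enjoys the same two properties by Theorem \ref{Self-Adjointness and Spectrum of S}. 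Since $A^2 P_n = n^2(n+1)^2 P_n = S P_n$ and $\{P_n\}$ is complete in $L^2(-1,1)$, the two operators agree on the dense span of their common eigenfunctions.

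From there I would argue as follows. Let $M=\operatorname{span}\{P_n : n\in\mathbb{N}_0\}$. Both $A^2$ and $S$ are self-adjoint, both contain $M$ in their domains, and both act as $\ell^2[\cdot]$ (equivalently as multiplication by $n^2(n+1)^2$ on the eigenbasis) on $M$. The first step is to show $M$ is a core for each operator. For a self-adjoint operator with a complete orthonormal (here orthogonal) system of eigenvectors and discrete spectrum, the finite linear span of eigenvectors is a core: this follows because, for any $f$ in the domain, the partial sums $f_N=\sum_{n=0}^N (f,\widehat{P_n})\widehat{P_n}$ (with $\widehat{P_n}$ the normalized polynomials) satisfy $f_N\to f$ and $A^2 f_N = \sum_{n=0}^N n^2(n+1)^2 (f,\widehat{P_n})\widehat{P_n}\to A^2 f$ in $L^2(-1,1)$, by the spectral theorem. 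The identical statement holds for $S$. Hence $M$ is a common core on which $A^2$ and $S$ agree.

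The conclusion then follows from the elementary fact that a self-adjoint operator is the closure of its restriction to any core. Since $A^2\!\restriction_M = S\!\restriction_M$ and $M$ is a core for both, taking closures gives $A^2 = \overline{A^2\!\restriction_M} = \overline{S\!\restriction_M} = S$, and in particular $\mathcal{D}(A^2)=\mathcal{D}(S)$.

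I expect the main obstacle to be establishing cleanly that $M$ is a core for $A^2$ — that is, that the algebraically defined domain $\mathcal{D}(A^2)=\{f\in\mathcal{D}(A)\mid Af\in\mathcal{D}(A)\}$ is actually captured by closing the action on polynomials. The spectral-theorem argument above handles this abstractly, but one must take care that the convergence $A^2 f_N\to A^2 f$ genuinely holds in the $L^2$ norm for every $f\in\mathcal{D}(A^2)$, which rests on $\sum_n n^4(n+1)^4 |(f,\widehat{P_n})|^2<\infty$, i.e.\ on $f$ lying in the domain of the spectral operator $A^2$ in the first place. An alternative route, should one wish to avoid invoking the spectral theorem twice, is to show directly that $\mathcal{D}(S)\subseteq\mathcal{D}(A^2)$ by verifying that each $f\in\mathcal{D}(S)$ satisfies the iterated membership $f\in\mathcal{D}(A)$ and $Af\in\mathcal{D}(A)$ using the boundary-condition characterization \eqref{D(A)} of $\mathcal{D}(A)$ together with the limit conditions in \eqref{D(S)}; then, since $S$ is self-adjoint and $A^2$ is symmetric and $\mathcal{D}(S)\subseteq\mathcal{D}(A^2)$ with $A^2\!\restriction_{\mathcal{D}(S)}=S$, maximality of self-adjoint operators forces $\mathcal{D}(A^2)=\mathcal{D}(S)$. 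This direct inclusion is where the real analytic content lies, and I anticipate the delicate point is relating the GKN form boundary data $[f,1]_2(\pm1)$ and $[f,x]_2(\pm1)$ to the second-order boundary condition $\lim_{x\to\pm1}(1-x^2)(\ell[f])'(x)=0$ needed to place $\ell[f]=Af$ in $\mathcal{D}(A)$.
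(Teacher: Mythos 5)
Your argument is correct, but it takes a genuinely different route from the paper's. Where the paper verifies both inclusions $\mathcal{D}(S)\subset\mathcal{D}(A^{2})$ and $\mathcal{D}(A^{2})\subset\mathcal{D}(S)$ by hand --- repeated applications of the Chisholm--Everitt inequality (Theorem \ref{CE Theorem}) to obtain $\ell^{\prime}[f]\in L^{2}(-1,1)$ and hence $\ell[f]\in AC[-1,1]$, followed by careful limit arguments relating $[f,1]_{2}(\pm1)$ and $[f,x]_{2}(\pm1)$ to the quantities $(1-x^{2})f^{\prime}$, $(1-x^{2})^{2}f^{\prime\prime}$ and $\left((1-x^{2})^{2}f^{\prime\prime}\right)^{\prime}$ --- you invoke only the abstract fact that a self-adjoint operator possessing a complete orthogonal system of eigenvectors is uniquely determined by that system and the associated eigenvalues. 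Your premises are all in place before Theorem \ref{Main Theorem 1} is stated: $A^{2}$ is self-adjoint with $A^{2}P_{n}=n^{2}(n+1)^{2}P_{n}$ (Section \ref{The Square of the Legendre Operator}), and $S$ is self-adjoint with the same complete eigenfunction system and the same eigenvalues (Theorem \ref{Self-Adjointness and Spectrum of S}, whose proof via the GKN theorem and the direct check $[P_{n},1]_{2}(\pm1)=[P_{n},x]_{2}(\pm1)=0$ does not depend on Theorem \ref{Main Theorem 1}). Your core argument is sound: for $f$ in the domain of either operator, symmetry gives $(Tf,\widehat{P}_{n})=n^{2}(n+1)^{2}(f,\widehat{P}_{n})$, so the partial sums converge in graph norm, closedness gives $\widetilde{T}\subseteq A^{2}$ and $\widetilde{T}\subseteq S$ for the spectrally defined operator $\widetilde{T}$, and maximality of self-adjoint operators forces $A^{2}=\widetilde{T}=S$. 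What your shortcut does not deliver --- and what the paper's longer analytic proof is really for --- are the quantitative byproducts ($\ell^{\prime}[f]\in L^{2}(-1,1)$, $\ell[f]\in AC[-1,1]$, and the individual vanishing of $(1-x^{2})^{2}f^{\prime\prime}$ and $\left((1-x^{2})^{2}f^{\prime\prime}\right)^{\prime}$ at $\pm1$) that are reused in the proofs of Theorems \ref{Main Theorem 2} and \ref{Main Theorem 3} and in Corollary \ref{Smoothness of D(A^2)}; moreover, the abstract argument is unavailable for those later theorems, since $B$ and $D$ are not given a priori as domains of self-adjoint operators. Your fallback observation --- that the single inclusion $\mathcal{D}(S)\subseteq\mathcal{D}(A^{2})$ together with self-adjointness of $S$ and symmetry of $A^{2}$ already forces equality --- is also correct, and is a shortcut the paper itself does not exploit.
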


\begin{proof}
see Section \ref{Proof of Main Theorem 1}.
\end{proof}

\begin{theorem}
\label{Main Theorem 2}Let $B$ and $\mathcal{D}(S)$ be given, respectively, as
in $($\ref{LD Representation}$)$ and $($\ref{D(S)}$).$ Then
\[
B=\mathcal{D}(S).
\]

\end{theorem}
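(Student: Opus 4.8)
First I would reduce the whole statement to a question about intermediate derivatives via the algebraic identity coming from (\ref{Square of Legendre DE}):
\[
(1-x^{2})^{2}f^{(4)}(x)=\ell^{2}[f](x)+8x(1-x^{2})f'''(x)-(14x^{2}-6)f''(x)-4xf'(x),
\]
valid for every $f$ with $f,f',f'',f'''\in AC_{\mathrm{loc}}(-1,1)$. Because $14x^{2}-6$ and $4x$ are bounded on $(-1,1)$ and $|8x(1-x^{2})|\le 8(1-x^{2})$, the functions $(1-x^{2})^{2}f^{(4)}$ and $\ell^{2}[f]$ differ by an element of $L^{2}(-1,1)$ exactly when $f',f''\in L^{2}(-1,1)$ and $(1-x^{2})f'''\in L^{2}(-1,1)$. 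Hence, modulo the shared local–absolute–continuity hypotheses, the only real content of $B=\mathcal{D}(S)$ is (a) the equivalence of the two integrability requirements $(1-x^{2})^{2}f^{(4)}\in L^{2}$ and $f,\ell^{2}[f]\in L^{2}$, and (b) the fact that the four boundary limits in (\ref{D(S)}) are automatically present. Both (a) and (b) I would extract from the Chisholm-Everitt inequality of Section \ref{Key Integral Inequality}.

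For the inclusion $B\subseteq\mathcal{D}(S)$ I would take $f\in B$ and apply the Chisholm-Everitt estimate to the hypothesis $(1-x^{2})^{2}f^{(4)}\in L^{2}(-1,1)$ to obtain $f,f',f''\in L^{2}(-1,1)$ and $(1-x^{2})f'''\in L^{2}(-1,1)$, together with the vanishing of the endpoint limits $\lim_{x\to\pm1}(1-x^{2})f'(x)$ and $\lim_{x\to\pm1}\big((1-x^{2})^{2}f''(x)\big)'$. The identity above then gives $\ell^{2}[f]\in L^{2}(-1,1)$, so $f\in\Delta_{2,\mathrm{\max}}$, and reading the vanishing limits through (\ref{[f,1]_2}) and (\ref{[f,x]_2}) yields $[f,1]_{2}(\pm1)=[f,x]_{2}(\pm1)=0$; thus $f\in\mathcal{D}(S)$. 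For the reverse inclusion $\mathcal{D}(S)\subseteq B$ I would take $f\in\mathcal{D}(S)$, so $f,\ell^{2}[f]\in L^{2}(-1,1)$ and the four GKN conditions hold, and again invoke Chisholm-Everitt—now using the vanishing boundary terms guaranteed by those conditions—to conclude $f',f''\in L^{2}(-1,1)$ and $(1-x^{2})f'''\in L^{2}(-1,1)$. The identity then delivers $(1-x^{2})^{2}f^{(4)}\in L^{2}(-1,1)$, i.e. $f\in B$.

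The main obstacle is the correct, two–directional application of the Chisholm-Everitt inequality and, above all, the bookkeeping of its boundary terms. This is genuinely delicate rather than routine: the function $\ln(1-x)$ lies in $\Delta_{2,\mathrm{\max}}$ but satisfies $(1-x^{2})f'''\sim -4(1-x)^{-2}\notin L^{2}(-1,1)$, so the integrability of the intermediate derivatives is simply false without the boundary conditions. The indicial equation $r^{2}(r-1)^{2}=0$ has double roots at $r=0$ and $r=1$, and the borderline singular solutions $\ln(1\pm x)$ and $(1\pm x)\ln(1\pm x)$ are precisely the obstructions; for each of them one computes $(1-x^{2})^{2}f^{(4)}\sim (1-x)^{-2}\notin L^{2}$, so imposing $(1-x^{2})^{2}f^{(4)}\in L^{2}$ annihilates exactly their contribution. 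This is the analytic mechanism by which the single weighted condition defining $B$ \emph{encodes} the four boundary conditions defining $\mathcal{D}(S)$, and making it rigorous—ensuring the boundary terms vanish in each direction without a circular appeal to the conclusion—is where the real work lies. (I note in passing that one could instead deduce $B=\mathcal{D}(S)$ immediately by combining the left-definite identity $\mathcal{D}(A^{2})=B$ in (\ref{LD Representation}) with Theorem \ref{Main Theorem 1}; but that shortcut hides precisely the encoding phenomenon that this direct argument is meant to expose.)
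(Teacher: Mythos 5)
Your outline is correct and is, in substance, the paper's own argument: both directions rest on the Chisholm--Everitt Theorem \ref{CE Theorem} cascade ($(1-x^{2})^{2}f^{(4)}\in L^{2}\Rightarrow(1-x^{2})f'''\in L^{2}\Rightarrow f''\in L^{2}$, hence $f,f'\in AC[-1,1]$) combined with the expansion of $\ell^{2}[f]$, exactly as you describe. Two points of comparison. First, for the inclusion $\mathcal{D}(S)\subseteq B$ the paper does not rerun the boundary-condition analysis from scratch as you propose; it first proves Theorem \ref{Main Theorem 1} and then, in its three preliminary lemmas, takes $f\in\mathcal{D}(S)=\mathcal{D}(A^{2})$ so that $f'\in L^{2}(-1,1)$ and $f\in AC[-1,1]$ come for free from Theorem \ref{ELM}; the CE Theorem is then applied (first with $\varphi=(1-x^{2})^{-1}$, then with $\varphi=(1-x^{2})^{-2}$, $\psi=1-x^{2}$) to obtain $\tfrac{1}{1-x^{2}}\left((1-x^{2})^{2}f''\right)'\in L^{2}(-1,1)$ and then $f''\in L^{2}(-1,1)$ (Theorem \ref{f'' in L^2}). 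Your self-contained variant, which extracts $f'\in L^{2}$ directly from the GKN conditions, is viable --- it essentially repeats the work already done in the paper's proof of $\mathcal{D}(S)\subset\mathcal{D}(A^{2})$ --- but it buys nothing once Theorem \ref{Main Theorem 1} is in hand. Second, and more substantively, your claim that the CE estimate ``obtains \dots\ the vanishing of the endpoint limits'' overstates what that theorem delivers: CE gives only the $L^{2}$ memberships. The vanishing of $\lim_{x\to\pm1}\left((1-x^{2})^{2}f''\right)'$ and of $\lim_{x\to\pm1}(1-x^{2})^{2}f''$ (the latter is also required for $[f,x]_{2}(\pm1)=0$ via $($\ref{[f,x]_2}$)$ and is absent from your list of limits) needs a separate step: Green's formula guarantees these limits exist, and if either were nonzero, integrating the resulting lower bound contradicts $(1-x^{2})f'''-4xf''\in L^{2}(-1,1)$, respectively $f''\in L^{2}(-1,1)$. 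You correctly flag this bookkeeping as where the real work lies, so this is a presentational rather than a logical gap, but those contradiction arguments are precisely the step your sketch leaves implicit.
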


\begin{proof}
see Section \ref{Proof of Main Theorem 2}.
\end{proof}

\begin{theorem}
\label{Main Theorem 3}Let $\mathcal{D}(S)$ and $D$ be given, respectively, as
in $($\ref{D(S)}$)$ and $($\ref{New Domain D}$).$ Then%
\[
D=\mathcal{D}(S).
\]

\end{theorem}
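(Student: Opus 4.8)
The plan is to exploit the fact that $D$ and $\mathcal{D}(S)$, as defined in $(\ref{New Domain D})$ and $(\ref{D(S)})$, impose \emph{identical} smoothness and integrability requirements, namely $f,f^{\prime},f^{\prime\prime},f^{\prime\prime\prime}\in AC_{\mathrm{loc}}(-1,1)$ and $f,\ell^{2}[f]\in L^{2}(-1,1)$, i.e. $f\in\Delta_{2,\mathrm{\max}}$; the two domains differ only in their four boundary conditions. So everything reduces to showing that, for $f\in\Delta_{2,\mathrm{\max}}$, the GKN conditions $\lim_{x\to\pm1}[f,1]_{2}(x)=\lim_{x\to\pm1}[f,x]_{2}(x)=0$ are equivalent to the GKN-like conditions $\lim_{x\to\pm1}(1-x^{2})f^{\prime}(x)=0$ and $\lim_{x\to\pm1}\bigl((1-x^{2})^{2}f^{\prime\prime}(x)\bigr)^{\prime}=0$. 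The algebraic engine is the pair of identities $(\ref{[f,1]_2})$ and $(\ref{[f,x]_2})$ together with the factorization $\ell[f]=-(1-x^{2})f^{\prime\prime}+2xf^{\prime}$, from which a direct differentiation yields the clean identity $[f,1]_{2}(x)=-(1-x^{2})(\ell[f])^{\prime}(x)$ and hence $\bigl((1-x^{2})^{2}f^{\prime\prime}\bigr)^{\prime}=2(1-x^{2})f^{\prime}-(1-x^{2})(\ell[f])^{\prime}$.

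For the inclusion $\mathcal{D}(S)\subseteq D$ I would use Theorem $\ref{Main Theorem 1}$ to identify $\mathcal{D}(S)=\mathcal{D}(A^{2})$ and then read off the boundary data from the algebraic definition $(\ref{Algebraic Definition of Squared Domain})$. If $f\in\mathcal{D}(A^{2})$ then $f\in\mathcal{D}(A)$, so $(\ref{D(A)})$ gives the first GKN-like condition $\lim_{x\to\pm1}(1-x^{2})f^{\prime}(x)=0$ at once; moreover $\ell[f]\in\mathcal{D}(A)$, so applying $(\ref{D(A)})$ to $\ell[f]$ gives $\lim_{x\to\pm1}(1-x^{2})(\ell[f])^{\prime}(x)=0$. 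Feeding these two facts into the identity for $\bigl((1-x^{2})^{2}f^{\prime\prime}\bigr)^{\prime}$ above yields the second GKN-like condition, so $f\in D$.

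For the reverse inclusion $D\subseteq\mathcal{D}(S)$ I would again target $\mathcal{D}(A^{2})$ via Theorem $\ref{Main Theorem 1}$, showing $f\in\mathcal{D}(A)$ and $\ell[f]\in\mathcal{D}(A)$. Granting the two GKN-like conditions, the identity $[f,1]_{2}=-(1-x^{2})(\ell[f])^{\prime}$ immediately gives $\lim_{x\to\pm1}(1-x^{2})(\ell[f])^{\prime}=0$, which is exactly the boundary condition placing $\ell[f]$ in $\mathcal{D}(A)$ once $\ell[f]\in\Delta_{1,\mathrm{\max}}$ is known; the first GKN-like condition is the boundary condition placing $f$ in $\mathcal{D}(A)$. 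The one genuinely analytic point — and the step I expect to be the main obstacle — is verifying the membership $\ell[f]\in L^{2}(-1,1)$, which is what is needed to conclude $f,\ell[f]\in\Delta_{1,\mathrm{\max}}$ (cf. the equivalences of Theorem $\ref{ELM}$). This does not follow from $f\in\Delta_{2,\mathrm{\max}}$ by formal manipulation alone, since $\ell^{2}[f]\in L^{2}$ controls only a fourth-order combination of derivatives; here I would invoke the Chisholm--Everitt integral inequality (the key tool flagged for Section $\ref{Key Integral Inequality}$) to bound $\Vert\ell[f]\Vert$ in terms of $\Vert f\Vert$, $\Vert\ell^{2}[f]\Vert$ and the now-vanishing boundary terms, thereby establishing $\ell[f]\in L^{2}(-1,1)$.

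I would close with an alternative route and a consistency check. Since $S$ is self-adjoint and I will have shown $\mathcal{D}(S)\subseteq D\subseteq\Delta_{2,\mathrm{\max}}$, the reverse inclusion can instead be obtained by proving that $\ell^{2}[\cdot]$ restricted to $D$ is symmetric, i.e. $[f,g]_{2}\big|_{-1}^{1}=0$ for all $f,g\in D$; maximality of the self-adjoint operator $S$ then forces $D=\mathcal{D}(S)$, trading the $L^{2}$-membership argument for an analysis of the boundary form $(\ref{Sesquilinear Form for Square})$ under the GKN-like conditions. As a structural sanity check, the Frobenius solutions $1,\;(1\mp x),\;\ln(1\mp x),\;(1\mp x)\ln(1\mp x)$ at $x=\pm1$ show that both the GKN and the GKN-like conditions are each equivalent to the vanishing of the coefficients of the two logarithmic solutions, which is precisely why the two characterizations coincide.
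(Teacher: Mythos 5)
Your proposal is correct, and in the direction $\mathcal{D}(S)\subseteq D$ it coincides with the paper's argument: both pass through $\mathcal{D}(S)=\mathcal{D}(A^{2})$ and use the identity $[f,1]_{2}(x)=-(1-x^{2})(\ell[f])^{\prime}(x)$ (equation $($\ref{A^2=S-1}$)$ in the paper) to convert the $\mathcal{D}(A)$-type boundary data into $\lim_{x\rightarrow\pm1}\left((1-x^{2})^{2}f^{\prime\prime}(x)\right)^{\prime}=0$. In the reverse direction you correctly isolate the one genuine analytic obstacle, and your resolution of it is the same as the paper's: apply the CE Theorem to $\ell^{2}[f]=-\left((1-x^{2})\ell^{\prime}[f]\right)^{\prime}$ with $\varphi=(1-x^{2})^{-1}$, $\psi=1$, using that the boundary term $\lim_{x\rightarrow\pm1}(1-x^{2})\ell^{\prime}[f](x)=-[f,1]_{2}(\pm1)$ vanishes by the two GKN-like conditions, to conclude $\ell^{\prime}[f]\in L^{2}(-1,1)$ and hence $\ell[f]\in AC[-1,1]\subset L^{2}(-1,1)$; your looser phrasing about ``bounding $\Vert\ell[f]\Vert$'' should be sharpened to exactly this, since what CE delivers is the representation $\ell^{\prime}[f](x)=\frac{1}{1-x^{2}}\int_{x}^{1}\ell^{2}[f](t)\,dt$ on $[0,1)$. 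Where you genuinely diverge is the endgame: you place $f$ and $\ell[f]$ separately in $\mathcal{D}(A)$ and invoke Theorem \ref{Main Theorem 1}, whereas the paper verifies the GKN conditions of $($\ref{D(S)}$)$ directly, which costs it a second CE application (to get $f^{\prime}\in L^{2}(-1,1)$) and a contradiction argument showing $\lim_{x\rightarrow\pm1}(1-x^{2})^{2}f^{\prime\prime}(x)=0$ before it can conclude $[f,x]_{2}(\pm1)=0$. Your route is the more economical of the two because that boundary analysis is already absorbed into the proof of Theorem \ref{Main Theorem 1}; the paper's has the virtue of making the reverse inclusion self-contained at the level of the boundary form. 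Two minor cautions: the proposed alternative via symmetry and maximality does not actually trade away the hard analysis, since verifying $\left.[f,g]_{2}\right\vert_{-1}^{1}=0$ for all $f,g\in D$ still requires the individual limits $\lim_{x\rightarrow\pm1}(1-x^{2})^{2}f^{\prime\prime}(x)=0$ and the boundedness of $f$ and $f^{\prime}$; and the Frobenius remark is a heuristic consistency check rather than a proof, since general maximal-domain functions are not spanned by the four Frobenius solutions.
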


\begin{proof}
see Section \ref{Proof of Main Theorem 3}.
\end{proof}

From these three theorems, we obtain our main result, namely

\begin{theorem}
\label{Main Theorem 4}Let $\Delta_{2,\mathrm{\max}}$, given in $($%
\ref{Maximal Domain for Square}$),$ be the maximal domain of the formal square
$\ell^{2}[\cdot]$ of the Legendre differential expression defined by
\[
\ell^{2}[y](x)=\left(  (1-x^{2})^{2}y^{\prime\prime}(x)\right)  ^{\prime
\prime}-2\left(  (1-x^{2})y^{\prime}(x)\right)  ^{\prime}\quad(x\in(-1,1))
\]
and let $[\cdot,\cdot]_{2}$ be the associated sequilinear form for $\ell
^{2}[\cdot]$ given in $($\ref{Sesquilinear Form for Square}$).$ Define the
operator $T:\mathcal{D}(T)\subset L^{2}(-1,1)\rightarrow L^{2}(-1,1)$ by
\begin{align*}
(Tf)(x)  &  =\ell^{2}[f](x)\quad(\text{a.e. }x\in(-1,1))\\
f  &  \in\mathcal{D}(T):=\mathcal{D}(A^{2}),
\end{align*}
where $\mathcal{D}(A^{2}),$ algebraically defined in $($%
\ref{Algebraic Definition of Squared Domain}$),$ is the domain of the square
of the Legendre polynomials operator $A$ defined in $($\ref{D(A)}$).$ That is
to say, $T$ is the square of the classical Legendre polynomials operator $A,$
given in $($\ref{The operator A}$)\ $and $($\ref{D(A)}$).$ Then the following
statements are equivalent:

\begin{enumerate}
\item[(i)] $f\in\mathcal{D}(T);\medskip$

\item[(ii)] $f,f^{\prime},f^{\prime\prime},f^{\prime\prime\prime}\in
AC_{\mathrm{loc}}(-1,1)$ and $(1-x^{2})^{2}f^{(4)}\in L^{2}(-1,1);\medskip$

\item[(iii)] $f\in\Delta_{2,\mathrm{\max}}$ and $[f,1]_{2}(\pm1)=[f,x]_{2}%
(\pm1)=0;\medskip$

\item[(iv)] $f\in\Delta_{2,\mathrm{\max}}$ and $\lim_{x\rightarrow\pm
1}(1-x^{2})f^{\prime}(x)=\lim_{x\rightarrow\pm1}\left(  (1-x^{2})^{2}%
f^{\prime\prime}(x)\right)  ^{\prime}=0.$
\end{enumerate}

Moreover, $T$ is a self-adjoint operator in $L^{2}(-1,1)$ having the Legendre
polynomials $\{P_{n}\}_{n=0}^{\infty}$ as a complete set of eigenfunctions in
$L^{2}(-1,1)$ and having discrete spectrum $\sigma(T^{2})$ explicitly given by%
\[
\sigma(T^{2})=\{n^{2}(n+1)^{2}\mid n\in\mathbb{N}_{0}\}.
\]

\end{theorem}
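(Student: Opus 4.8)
The plan is to establish Theorem~\ref{Main Theorem 4} as a direct corollary of the three preceding theorems, assembling the chain of equalities into a single list of equivalences. The key observation is that the four characterizations $(i)$--$(iv)$ correspond exactly to the four domains already introduced: statement $(i)$ is $\mathcal{D}(A^{2})=\mathcal{D}(T)$ from \eqref{Algebraic Definition of Squared Domain}; statement $(ii)$ is membership in $B$ from \eqref{LD Representation}; statement $(iii)$ is membership in $\mathcal{D}(S)$ from \eqref{D(S)}; and statement $(iv)$ is membership in $D$ from \eqref{New Domain D}. Thus the plan is to invoke Theorem~\ref{Main Theorem 1} (which gives $\mathcal{D}(A^{2})=\mathcal{D}(S)$, i.e. $(i)\Leftrightarrow(iii)$), Theorem~\ref{Main Theorem 2} (which gives $B=\mathcal{D}(S)$, i.e. $(ii)\Leftrightarrow(iii)$), and Theorem~\ref{Main Theorem 3} (which gives $D=\mathcal{D}(S)$, i.e. $(iv)\Leftrightarrow(iii)$). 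Chaining these through the common node $\mathcal{D}(S)$ yields the equivalence of all four statements.

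First I would note that statement $(ii)$ as written combines the regularity hypothesis $f,f',f'',f'''\in AC_{\mathrm{loc}}(-1,1)$ with the condition $(1-x^{2})^{2}f^{(4)}\in L^{2}(-1,1)$, and I would remark that this is precisely the defining condition for the set $B$ in \eqref{LD Representation}; here one uses the explicit computation of $\ell^2[\cdot]$ in \eqref{Square of Legendre DE}, whose leading term is $(1-x^2)^2 y^{(4)}$, to see that under the stated local absolute continuity the requirement $(1-x^2)^2 f^{(4)}\in L^2$ is equivalent to $\ell^2[f]\in L^2$ once one checks that the lower-order terms $-8x(1-x^2)f''' + (14x^2-6)f'' + 4xf'$ automatically lie in $L^2(-1,1)$ for $f\in B$. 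This last point, while routine, is the one genuine verification needed to align $(ii)$ with membership in $\Delta_{2,\max}$ as used in $(iii)$ and $(iv)$.

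The self-adjointness assertion and the spectral description follow immediately from Theorem~\ref{Self-Adjointness and Spectrum of S} together with Theorem~\ref{Main Theorem 1}: since $T=A^2$ and $\mathcal{D}(A^2)=\mathcal{D}(S)$, the operator $T$ coincides with the self-adjoint operator $S$, whose spectrum and complete orthogonal system of Legendre-polynomial eigenfunctions are already recorded there. I would therefore simply transport those conclusions to $T$. (I note that the displayed spectrum is written $\sigma(T^{2})$ in the statement; since $T=A^2$ already, this should read $\sigma(T)$, and the set $\{n^2(n+1)^2\mid n\in\mathbb{N}_0\}$ is the spectrum of $S=A^2=T$ as given by Theorem~\ref{Self-Adjointness and Spectrum of S}.)

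The proof itself contains no real obstacle, since all the substantive work is deferred to Theorems~\ref{Main Theorem 1}--\ref{Main Theorem 3}; the main point requiring care is purely organizational, namely matching each labeled statement $(i)$--$(iv)$ to the correct one of the four domains $\mathcal{D}(A^2)$, $B$, $\mathcal{D}(S)$, $D$ and then citing the appropriate theorem for each link of the equivalence chain. The hard part, in other words, is entirely hidden in the earlier theorems — particularly Theorem~\ref{Main Theorem 1}, where the Chisholm-Everitt integral inequality is invoked to show that the algebraically defined domain $\mathcal{D}(A^2)$ actually satisfies the GKN boundary conditions defining $\mathcal{D}(S)$ — so the present proof reduces to a short assembly argument.
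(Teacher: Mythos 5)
Your proposal is correct and follows exactly the paper's route: the paper states Theorem \ref{Main Theorem 4} as an immediate consequence of Theorems \ref{Main Theorem 1}--\ref{Main Theorem 3} (chained through $\mathcal{D}(S)$) together with Theorem \ref{Self-Adjointness and Spectrum of S}, which is precisely your assembly argument. Your side remarks — that the alignment of (ii) with $\Delta_{2,\max}$ is already handled inside the proof of Theorem \ref{Main Theorem 2}, and that $\sigma(T^{2})$ is a typo for $\sigma(T)$ — are both accurate.
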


\section{A Key Integral Inequality\label{Key Integral Inequality}}

A key result in our analysis below is the following operator inequality
established by Chisholm and Everitt (CE) in \cite{Chisholm-Everitt}.

\begin{theorem}
\label{CE Theorem}$($The CE\ Theorem$)$ Let $(a,b)$ be an open interval of the
real line $($bounded or unbounded$)$ and let $w$ be a Lebesgue measurable
function that is positive $a.e.$ $x\in(a,b).$ Suppose $\varphi,\psi
:(a,b)\rightarrow\mathbb{C}$ satisfy the conditions\newline$(i)$ $\varphi
,\psi\in L_{\mathrm{loc}}^{2}((a,b);w);$\newline$(ii)$ there exists
$c\in(a,b)$ such that $\varphi\in L^{2}((a,c];w)$ and $\psi\in L^{2}%
([c,b);w);$\newline$(iii)$ for all $[\alpha,\beta]\subset(a,b)$%
\[
\int_{\alpha}^{\beta}\left\vert \varphi(x)\right\vert ^{2}w(x)dx>0\text{ and
}\int_{\alpha}^{\beta}\left\vert \psi(x)\right\vert ^{2}w(x)dx>0.
\]
Define the linear operators $A,$ $B$ $:L^{2}((a,b);w)$ $\rightarrow
L_{\mathrm{loc}}^{2}((a,b);w)$ by%
\[
(Af)(x)=\varphi(x)\int_{x}^{b}\psi(t)f(t)w(t)dt\quad(t\in(a,b);f\in
L^{2}((a,b);w)),
\]
and%
\[
(Bf)(x)=\psi(x)\int_{a}^{x}\varphi(t)f(t)w(t)dt\quad(t\in(a,b);f\in
L^{2}((a,b);w)).
\]
Let $K:(a,b)\rightarrow(0,\infty)$ be given by%
\begin{equation}
K(x):=\left(  \int_{a}^{x}\left\vert \varphi(t)\right\vert ^{2}w(t)dt\right)
^{1/2}\left(  \int_{x}^{b}\left\vert \psi(t)\right\vert ^{2}w(t)dt\right)
^{1/2}\quad(t\in(a,b)), \label{5.1}%
\end{equation}
and define $K\in\lbrack0,\infty]$ by%
\begin{equation}
K:=\sup\{K(x)\mid x\in(a,b)\}. \label{5.2}%
\end{equation}
Then a necessary and sufficient condition that $A$ and $B$ are both bounded
operators from $L^{2}((a,b);w)$ into $L^{2}((a,b);w)$ is that
\[
0<K<\infty.
\]
Moreover, the following inequalities hold%
\begin{equation}
\left\Vert Af\right\Vert \leq2K\left\Vert f\right\Vert \quad(\,f\in
L^{2}(\,(a,b);w)\,) \label{eq1.15}%
\end{equation}%
\begin{equation}
\left\Vert Bg\right\Vert \leq2K\left\Vert g\right\Vert \quad(\,g\in
L^{2}(\,(a,b);w)\,) \label{eq1.16}%
\end{equation}
where the number $K$ is defined by $($\ref{5.2}$).$ In general, the number
$2K$ appearing in both $($\ref{eq1.15}$)$ and $($\ref{eq1.16}$)$ is best
possible for these inequalities to hold.

\begin{remark}
Theorem \ref{CE Theorem}, proven by Chisholm and Everitt in 1970, was extended
in 1999 by Chisholm, Everitt and Littlejohn to the spaces $L^{p}((a,b);w)$ and
$L^{q}((a,b);w)$ where $p,q>1$ are conjugate indices; see
\cite{Everitt-Chisholm-Littlejohn}. Both Theorem \ref{CE Theorem} and its
generalization in \cite{Everitt-Chisholm-Littlejohn} have seen several
applications including a new proof of the classical Hardy integral inequality
\cite[Section 9.8, Theorem 327]{Hardy-Littlewood-Polya} $($see \cite[Example
1]{Everitt-Chisholm-Littlejohn}$)$ and numerous applications to orthogonal
polynomials $($for example, see \cite[Section 6]{Everitt-Chisholm-Littlejohn}%
$).$ Several more applications of the CE Theorem will be given in this paper.
Indeed, Theorem \ref{CE Theorem} proves to be an indispensable tool in our
analysis below.
\end{remark}
\end{theorem}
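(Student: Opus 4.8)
The plan is to establish the equivalence and the two inequalities together, reducing everything to a single weighted estimate, and then to settle sharpness by a homogeneity argument. Throughout write $\Phi(x)=\int_a^x|\varphi(t)|^2 w(t)\,dt$ and $\Psi(x)=\int_x^b|\psi(t)|^2 w(t)\,dt$, so that $K(x)=\Phi(x)^{1/2}\Psi(x)^{1/2}$ and $K=\sup_x K(x)$. Note first that hypothesis $(iii)$ forces $K(x)>0$ for every $x$, so $0<K$ is automatic, and the substantive content of the ``necessary and sufficient'' claim is the equivalence $K<\infty\iff A,B$ bounded. Moreover, a short Fubini computation shows that, up to conjugating $\varphi,\psi$ (which changes neither $K$ nor any operator norm), $B$ is the Hilbert-space adjoint of $A$ in $L^2((a,b);w)$; hence $\|B\|=\|A\|$, so it suffices to treat $A$, and the displayed inequalities $(\ref{eq1.15})$ and $(\ref{eq1.16})$ stand or fall together. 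Equivalently, $B$ is the mirror image of $A$ under reflection with $\varphi\leftrightarrow\psi$, a symmetry that fixes $K$, so its estimate follows verbatim.

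For sufficiency I would prove $\|Af\|\le 2K\|f\|$ directly. Dominating by absolute values and applying Cauchy--Schwarz to the inner integral with the weight $\Psi(t)^{-1/2}$ is the \emph{crux}: since $\Psi'=-|\psi|^2 w$ a.e., one gets the exact identity $\int_x^b|\psi|^2\Psi^{-1/2}w=2\Psi(x)^{1/2}$, whence
\[
|(Af)(x)|^2\le 2\,\Psi(x)^{1/2}\int_x^b|f(t)|^2\Psi(t)^{1/2}w(t)\,dt.
\]
Multiplying by $|\varphi(x)|^2 w(x)$, integrating, and swapping the order of integration (legitimate by Tonelli, all integrands being nonnegative) reduces $\|Af\|^2$ to $2\int_a^b|f(t)|^2\Psi(t)^{1/2}\big(\int_a^t|\varphi|^2\Psi^{1/2}w\big)w(t)\,dt$. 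Now invoke $\Psi(x)\le K^2/\Phi(x)$ (i.e. $K(x)\le K$) to telescope the inner integral: $\int_a^t|\varphi|^2\Psi^{1/2}w\le K\int_a^t\Phi^{-1/2}\Phi'=2K\Phi(t)^{1/2}$. The outer integrand then collapses to $4K\,K(t)|f(t)|^2 w(t)\le 4K^2|f(t)|^2w(t)$, giving $\|Af\|^2\le 4K^2\|f\|^2$. The finiteness needed to run this, namely $\Phi(x),\Psi(x)<\infty$ for $x\in(a,b)$, follows from $K<\infty$ together with $(iii)$, since $\Phi(x)\le K^2/\Psi(x)$ and conversely.

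For necessity, assume $A$ is bounded with $\|A\|=M$ and test against $f=\overline{\psi}\,\mathbf{1}_{(x_0,c)}$ for fixed $x_0$ and $c<b$. On $(a,x_0)$ one has $(Af)(x)=\varphi(x)(\Psi(x_0)-\Psi(c))$, so $M^2\|f\|^2\ge\|Af\|^2$ yields, after dividing by $\|f\|^2=\Psi(x_0)-\Psi(c)$, the bound $M^2\ge\Phi(x_0)(\Psi(x_0)-\Psi(c))$; letting $c\to b$ gives $M^2\ge\Phi(x_0)\Psi(x_0)=K(x_0)^2$. As $x_0$ was arbitrary this is $K\le\|A\|$, so boundedness forces $K<\infty$ and, with the sufficiency bound, pins the norm into $K\le\|A\|\le 2K$, completing the equivalence.

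The genuinely delicate part, and the step I expect to be the main obstacle, is showing that the constant $2K$ cannot in general be lowered. Here I would exhibit a scaling-invariant example: take $(a,b)=(0,\infty)$, $w\equiv 1$, $\varphi(x)=x^\alpha$ and $\psi(x)=x^{-1-\alpha}$ with $\alpha>-1/2$ (one checks $(i)$--$(iii)$ hold and that $K=1/(2\alpha+1)<\infty$). The kernel $x^\alpha t^{-1-\alpha}\mathbf{1}_{\{t>x\}}$ of $A$ is homogeneous of degree $-1$, so $A$ commutes with dilations and is diagonalized by the Mellin transform; for a nonnegative degree-$(-1)$ kernel the symbol is maximized on the critical line $\Re s=1/2$, giving the exact norm
\[
\|A\|=\int_0^\infty x^{-1-\alpha}\mathbf{1}_{\{x>1\}}\,x^{-1/2}\,dx=\frac{2}{2\alpha+1}=2K,
\]
approached, since $x^{-1/2}\notin L^2$, by truncated test functions $x^{-1/2}\mathbf{1}_{(1,R)}$ as $R\to\infty$. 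Thus the universal constant is exactly $2K$ and no smaller multiple of $K$ suffices for all admissible data. The care needed to justify this exact operator norm, namely the Mellin/dilation diagonalization and the claim that the symbol attains its maximum at the center of the strip, is the least routine ingredient, and is where I would concentrate the effort.
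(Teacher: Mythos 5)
Your proposal is correct, but note that the paper does not prove this theorem at all: it is imported verbatim from Chisholm and Everitt's 1970 paper (see the remark following the statement and the citation \cite{Chisholm-Everitt}), so there is no internal proof to compare against. What you have written is a complete, self-contained proof along the standard lines of the weighted Hardy-inequality literature: the sufficiency argument via Cauchy--Schwarz with the weight $\Psi(t)^{-1/2}$, the exact identities $\int_x^b|\psi|^2\Psi^{-1/2}w=2\Psi(x)^{1/2}$ and $\int_a^t|\varphi|^2\Phi^{-1/2}w=2\Phi(t)^{1/2}$, Tonelli, and the pointwise bound $K(t)\le K$ is precisely the Muckenhoupt/Tomaselli-type argument, and it correctly yields $\|Af\|^2\le 4K^2\|f\|^2$; the necessity argument testing against $f=\overline{\psi}\,\mathbf{1}_{(x_0,c)}$ and letting $c\to b$ gives $K\le\|A\|$ and hence the equivalence; and the identification of $B$ (up to conjugation of the data, which preserves all norms and $K$) as the adjoint of $A$ legitimately reduces everything to $A$. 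Two small points. First, your displayed bound for $|(Af)(x)|^2$ is missing the factor $|\varphi(x)|^2$ on the right-hand side (the subsequent sentence makes the intent clear, but as written the display is false). Second, for the sharpness example with $\varphi(x)=x^\alpha$, $\psi(x)=x^{-1-\alpha}$ on $(0,\infty)$ you do not actually need the full Mellin diagonalization: the upper bound $\|A\|\le\int_0^\infty k(1,u)u^{-1/2}\,du=2K$ follows from Minkowski's integral inequality after the substitution $t=xu$ (the kernel being homogeneous of degree $-1$), and the matching lower bound follows by the elementary computation with the truncated test functions $x^{-1/2}\mathbf{1}_{(1,R)}$ that you already mention, whose Rayleigh quotients tend to $2K$ as $R\to\infty$; this sidesteps the one step you flag as delicate. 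With the typo repaired, the argument is sound and recovers the theorem, including the optimality of the constant $2K$.
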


\section{Proof of Theorem \ref{Main Theorem 1}\label{Proof of Main Theorem 1}}

We now prove Theorem \ref{Main Theorem 1}, namely that $\mathcal{D}%
(A^{2})=\mathcal{D}(S),$ where $\mathcal{D}(A^{2})$ is defined in
(\ref{Algebraic Definition of Squared Domain}) and $\mathcal{D}(S)$ is given
in (\ref{D(S)}). Throughout this section, we assume that $f$ is a real-valued
function on $(-1,1).$

\begin{proof}
\underline{$\mathcal{D}(S)\subset\mathcal{D}(A^{2})$}$:$ \newline Let $f\in
D(S).$ We know that\newline

\begin{enumerate}
\item[(i)] $f,f^{\prime},f^{\prime\prime},f^{\prime\prime\prime}\in
AC_{\mathrm{loc}}(-1,1);$

\item[(ii)] $f\in L^{2}(-1,1);$

\item[(iii)] $\ell^{2}[f]\in L^{2}(-1,1)$ where $\ell^{2}[\cdot]$ is defined
by $($\ref{Square of Legendre DE}$);$

\item[(iv)] $[f,1]_{2}(\pm1)=0,$ where $[\cdot,1]_{2}(\cdot)$ is given in
$($\ref{[f,1]_2}$);$

\item[(v)] $[f,x]_{2}(\pm1)=0,$ where $[\cdot,x]_{2}(\cdot)$ is given in
$($\ref{[f,x]_2}$).$
\end{enumerate}

\noindent Taking into account the definition of $\mathcal{D}(A)$ in
$($\ref{D(A)}$)$ and $\mathcal{D(}A^{2})$ in $($%
\ref{Algebraic Definition of Squared Domain}$),$ we need to show that

\begin{enumerate}
\item[(a)] $f,f^{\prime}\in AC_{\mathrm{loc}}(-1,1);$

\item[(b)] $f\in L^{2}(-1,1);$

\item[(c)] $\ell\lbrack f]=-\left(  (1-x^{2})f^{\prime}\right)  ^{\prime
}=-(1-x^{2})f^{\prime\prime}+2xf^{\prime}\in L^{2}(-1,1);$ in fact we will
show that $\ell\lbrack f]\in AC[-1,1];$

\item[(d)] $\lim_{x\rightarrow\pm1}(1-x^{2})f^{\prime}(x)=0;$

\item[(e)] $\ell\lbrack f],$ $\ell^{\prime}[f]\in AC_{\mathrm{loc}}(-1,1); $

\item[(f)] $\ell^{2}[f]\in L^{2}(-1,1);$

\item[(g)] $\lim_{x\rightarrow\pm1}(1-x^{2})\ell^{\prime}[f]=\lim
_{x\rightarrow\pm1}(1-x^{2})\left(  (1-x^{2})f^{\prime\prime\prime
}(x)-4xf^{\prime\prime}(x)-2f^{\prime}(x)\right)  =0.$
\end{enumerate}

\noindent Clearly, $(\mathrm{a}),$ $(\mathrm{b})$ and $(\mathrm{f})$ are
satisfied. As for $(\mathrm{g}),$ note that%
\begin{align}
-(1-x^{2})\ell^{\prime}[f](x)  &  =(1-x^{2})^{2}f^{\prime\prime\prime
}(x)-4x(1-x^{2})f^{\prime\prime}(x)-2(1-x^{2})f^{\prime}(x)\nonumber\\
&  =\left(  (1-x^{2})^{2}f^{\prime\prime}(x)\right)  ^{\prime}-2(1-x^{2}%
)f^{\prime}(x)\label{A^2=S-1}\\
&  =[f,1]_{2}(x)\nonumber
\end{align}
so $(\mathrm{g})$ follows from $(\mathrm{iv})$ above. Moreover, by
$(\mathrm{i})$ and the fact that the product of a polynomial and a function
$g\in AC_{\mathrm{loc}}(-1,1)$ also belongs to $AC_{\mathrm{loc}}(-1,1),$ we
see that $(\mathrm{e})$ follows. To show $(\mathrm{c})$ note that, by
$(\mathrm{iii}),$
\begin{equation}
\ell^{2}[f](x)=\ell\lbrack\ell\lbrack f]](x)=-\left(  (1-x^{2})\ell^{\prime
}[f](x)\right)  ^{\prime}\in L^{2}(-1,1). \label{ell^2 identity}%
\end{equation}
We now apply the CE Theorem on the interval $[0,1)$ with $\psi(x)=1,\varphi
(x)=1/(1-x^{2})$ and $w(x)=1;\ $\ note that $\varphi\in L^{2}(0,1/2]$ and
$\psi\in L^{2}[1/2,1).$ A calculation shows that
\[
K^{2}(x)=\int_{0}^{x}\frac{dt}{(1-t^{2})^{2}}\cdot\int_{x}^{1}dt\quad
(x\in(0,1))
\]
is bounded on $(0,1).$ Hence we see, from Theorem \ref{CE Theorem}, that
\[
\varphi(x)\int_{x}^{1}\psi(t)\ell^{2}[f](t)w(t)dt=\dfrac{1}{1-x^{2}}\int
_{x}^{1}\ell^{2}[f](t)dt\in L^{2}[0,1).
\]
That is to say, by (\ref{ell^2 identity}),
\begin{equation}
\dfrac{1}{1-x^{2}}\left(  (1-x^{2})\ell^{\prime}[f](x)-\lim_{x\rightarrow
1}(1-x^{2})\ell^{\prime}[f](x)\right)  \in L^{2}[0,1). \label{A^2=S-2}%
\end{equation}
By $(\mathrm{iv})$ and $($\ref{A^2=S-1}$),$ we know
\[
\lim_{x\rightarrow1}(1-x^{2})\ell^{\prime}[f](x)=0.
\]
Hence, $($\ref{A^2=S-2}$)$ simplifies to
\[
\ell^{\prime}[f]\in L^{2}[0,1).
\]
A similar application of the CE Theorem on $(-1,0]$ reveals that $\ell
^{\prime}[f]\in L^{2}(-1,0]$ and thus we see that%
\[
\ell^{\prime}[f]\in L^{2}(-1,1).
\]
It follows that%
\[
\ell\lbrack f]\in AC[-1,1]\subset L^{2}(-1,1),
\]
establishing $(\mathrm{c}).$ It remains to show that $(\mathrm{d})$ holds. To
this end, observe, from $($\ref{Legendre DE}$)$ and $($%
\ref{Square of Legendre DE}$)$ that%
\[
\left(  (1-x^{2})^{2}f^{\prime\prime}(x)\right)  ^{\prime\prime}=\ell
^{2}[f](x)-2\ell\lbrack f](x).
\]
Consequently, from $(\mathrm{c})\ $and $(\mathrm{f}),$
\[
\left(  (1-x^{2})^{2}f^{\prime\prime}(x)\right)  ^{\prime\prime}\in
L^{2}(-1,1)
\]
from which we see that
\[
\left(  (1-x^{2})^{2}f^{\prime\prime}(x)\right)  ^{\prime},\text{ }%
(1-x^{2})^{2}f^{\prime\prime}(x)\in AC[-1,1].
\]
In particular, we see that the limits%
\begin{equation}
\lim_{x\rightarrow\pm1}\left(  (1-x^{2})^{2}f^{\prime\prime}(x)\right)
^{\prime} \label{Limit 2}%
\end{equation}
and
\begin{equation}
\lim_{x\rightarrow\pm1}(1-x^{2})^{2}f^{\prime\prime}(x) \label{Limit 1}%
\end{equation}
exist and are finite. Moreover, from $(\mathrm{iv})$, $(\mathrm{v})$ and
$($\ref{[f,x]_2}$),$ we see that
\begin{equation}
0=\lim_{x\rightarrow\pm1}\left(  x[f,1]_{2}(x)-[f,x]_{2}(x)\right)
=\lim_{x\rightarrow\pm1}\left(  (1-x^{2})^{2}f^{\prime\prime}(x)-2(1-x^{2}%
)f(x)\right)  . \label{Crucial Limit}%
\end{equation}
In concert with $($\ref{Limit 1}$),$ we can say that%
\[
\lim_{x\rightarrow\pm1}(1-x^{2})f(x):=r
\]
exists and is finite. We claim that $r=0;$ to show this, we deal with the
limit as $x\rightarrow1;$ a similar proof can be made as $x\rightarrow-1.$
Suppose, to the contrary, that $r\neq0;$ without loss of generality, suppose
$r>0.$ Then there exists $x^{\ast}>0$ such that%
\[
(1-x^{2})f(x)\geq\dfrac{r}{2}\text{ for }x\in\lbrack x^{\ast},1).
\]
However, in this case, we see that%
\[
\infty>\int_{-1}^{1}\left\vert f(x)\right\vert ^{2}dx\geq\int_{x^{\ast}}%
^{1}\left\vert f(x)\right\vert ^{2}dx\geq\left(  \dfrac{r}{2}\right)  ^{2}%
\int_{x^{\ast}}^{1}\dfrac{dx}{(1-x^{2})^{2}}=\infty,
\]
contradicting $(\mathrm{ii})$. Hence it follows that
\[
\lim_{x\rightarrow\pm1}(1-x^{2})f(x)=0.
\]
Consequently, we see from $($\ref{Crucial Limit}$),$ that
\[
\lim_{x\rightarrow\pm1}(1-x^{2})^{2}f^{\prime\prime}(x)=0
\]
and, hence
\begin{equation}
\lim_{x\rightarrow\pm1}(1-x)^{2}f^{\prime\prime}(x)=0. \label{Crucial Limit 2}%
\end{equation}
We are now in position to prove part $(\mathrm{d}).$ We show that
\begin{equation}
\lim_{x\rightarrow1}(1-x^{2})f^{\prime}(x)=0; \label{Limit 3}%
\end{equation}
a similar argument establishes the limit as $x\rightarrow-1.$ Let
$\varepsilon>0.$ From\ $($\ref{Crucial Limit 2}$),$ there exists $x^{\ast}%
\in(0,1)$ such that%
\[
\left\vert (1-x)^{2}f^{\prime\prime}(x)\right\vert <\frac{\varepsilon}%
{2}\text{ for }x\in\text{ }[x^{\ast},1).
\]
Integrating this inequality over $[x^{\ast},x]\subset\lbrack x^{\ast},1)$
yields%
\[
\frac{\varepsilon}{2(1-x^{\ast})}+f^{\prime}(x^{\ast})-\frac{\varepsilon
}{2(1-x)}<f^{\prime}(x)<\frac{\varepsilon}{2(1-x)}+f^{\prime}(x^{\ast}%
)-\frac{\varepsilon}{2(1-x^{\ast})}\text{ for }x\in\text{ }[x^{\ast},1).
\]
Multiplying this inequality by $(1-x^{2})$ yields%
\begin{equation}
(1-x^{2})\left(  f^{\prime}(x^{\ast})+\frac{\varepsilon}{2(1-x^{\ast}%
)}\right)  -\frac{\varepsilon(1+x)}{2}<(1-x^{2})f^{\prime}(x)<\frac
{\varepsilon(1+x)}{2}+(1-x^{2})\left(  f^{\prime}(x^{\ast})-\frac{\varepsilon
}{2(1-x^{\ast})}\right)  . \label{Important Inequality}%
\end{equation}
Letting $x\rightarrow1,$ we obtain%
\[
-\varepsilon\leq\lim_{x\rightarrow1}(1-x^{2})f^{\prime}(x)\leq\varepsilon
\]
and this establishes $($\ref{Limit 3}$).$ This completes the proof that
$\mathcal{D}(S)\subset\mathcal{D}(A^{2}).\medskip$\newline\underline
{$\mathcal{D}(A^{2})\subset\mathcal{D}(S)$}$:$\newline Let $f\in
\mathcal{D}(A^{2}).$ Then $f\in\mathcal{D}(A)$ so
\begin{equation}
f,f^{\prime}\in AC_{\mathrm{loc}}(-1,1) \label{A^2=S-5}%
\end{equation}
and%
\begin{equation}
f\in L^{2}(-1,1). \label{A^2=S-6}%
\end{equation}
Moreover, since $\ell\lbrack f]\in\mathcal{D}(A)$, it follows that
\begin{equation}
\ell^{2}[f]=\ell\lbrack\ell\lbrack f]]\in L^{2}(-1,1), \label{A^2=S-7}%
\end{equation}%
\begin{equation}
\ell\lbrack f]=-(1-x^{2})f^{\prime\prime}+2xf^{\prime}\in AC_{\mathrm{loc}%
}(-1,1) \label{A^2=S-8}%
\end{equation}
and%
\begin{equation}
\ell^{\prime}[f]=-(1-x^{2})f^{\prime\prime\prime}+4xf^{\prime}+2f^{\prime}\in
AC_{\mathrm{loc}}(-1,1). \label{A^2=S-9}%
\end{equation}
It is clear that if $f,g\in AC_{\mathrm{loc}}(-1,1)$ then

\begin{enumerate}
\item[(a)$^{\prime}$] $f+g\in AC_{\mathrm{loc}}(-1,1);$

\item[(b)$^{\prime}$] $fg\in AC_{\mathrm{loc}}(-1,1);$

\item[(c)$^{\prime}$] If $g>0$ on $(-1,1)$ then $f/g\in AC_{\mathrm{loc}%
}(-1,1).$
\end{enumerate}

In particular, from $($\ref{A^2=S-5}$)$ and $(\mathrm{b})^{\prime},$ \noindent
we see that $2xf^{\prime}\in AC_{\mathrm{loc}}(-1,1).$ Combining this with
(\textrm{a})$^{\prime}$ and $($\ref{A^2=S-8}$),$ we obtain $(1-x^{2}%
)f^{\prime\prime}\in AC_{\mathrm{loc}}(-1,1).$ Since $1-x^{2}>0$ on $(-1,1)$
we infer from $(\mathrm{c})^{\prime}$ that
\begin{equation}
f^{\prime\prime}\in AC_{\mathrm{loc}}(-1,1). \label{A^2=S-10}%
\end{equation}
Continuing, $-4xf^{\prime\prime}-2f^{\prime}\in AC_{\mathrm{loc}}(-1,1)$ so
from $(\mathrm{a})^{\prime}$ and $($\ref{A^2=S-9}$),$ we have $(1-x^{2}%
)f^{\prime\prime\prime}\in AC_{\mathrm{loc}}(-1,1)$ and it then follows that%
\begin{equation}
f^{\prime\prime\prime}\in AC_{\mathrm{loc}}(-1,1). \label{A^2=S-11}%
\end{equation}
By definition of $\mathcal{D}(A)$ and the fact that $\ell\lbrack
f]\in\mathcal{D}(A),$ we see that
\[
\lim_{x\rightarrow\pm1}(1-x^{2})\ell^{\prime}[f](x)=0;
\]
consequently, in view of $($\ref{A^2=S-1}$),$ we see that
\begin{equation}
0=\lim_{x\rightarrow\pm1}[f,1]_{2}(x)=\lim_{x\rightarrow\pm1}\left(  \left(
(1-x^{2})^{2}f^{\prime\prime}(x)\right)  ^{\prime}-2(1-x^{2})f^{\prime
}(x)\right)  . \label{A^2=S-12}%
\end{equation}
Furthermore since $f\in\mathcal{D}(A),$ we have
\begin{equation}
\lim_{x\rightarrow\pm1}(1-x^{2})f^{\prime}(x)=0 \label{A^2=S-13}%
\end{equation}
so, from $($\ref{A^2=S-12}$),$ we see that%
\begin{equation}
\lim_{x\rightarrow\pm1}\left(  (1-x^{2})^{2}f^{\prime\prime}(x)\right)
^{\prime\prime}=0. \label{A^2=S-14}%
\end{equation}
To finish the proof, we need to show that%
\begin{align}
0  &  =[f,x]_{2}(\pm1)=\lim_{x\rightarrow\pm1}\left(  x[f,1]_{2}%
(x)-(1-x^{2})^{2}f^{\prime\prime}(x)+2(1-x^{2})f(x)\right) \label{A^2=S-15}\\
&  =\lim_{x\rightarrow\pm1}\left(  -(1-x^{2})^{2}f^{\prime\prime}%
(x)+2(1-x^{2})f(x)\right)  \text{ by }(\text{\ref{A^2=S-12}}).\nonumber
\end{align}
We note again, from Green's formula $($%
\ref{Green's Formula for the Legendre Square}$),$ that the limits in
$($\ref{A^2=S-15}$)$ exist and are finite. Since $f\in\mathcal{D}(A),$ we see
from Theorem \ref{ELM}, part $(\mathrm{v})$ that $f\in AC[-1,1]$ and hence%
\begin{equation}
\lim_{x\rightarrow\pm1}(1-x^{2})f(x)=0. \label{A^2=S-16}%
\end{equation}
Thus, proving $($\ref{A^2=S-15}$)$ reduces to showing%
\begin{equation}
\lim_{x\rightarrow\pm1}(1-x^{2})^{2}f^{\prime\prime}(x)=0. \label{A^2=S-17}%
\end{equation}
We show that
\begin{equation}
\lim_{x\rightarrow1}(1-x^{2})^{2}f^{\prime\prime}(x)=0; \label{A^2=S-18}%
\end{equation}
a similar argument will show
\[
\lim_{x\rightarrow-1}(1-x^{2})^{2}f^{\prime\prime}(x)=0.
\]
Suppose, to the contrary, that%
\[
\lim_{x\rightarrow1}(1-x^{2})^{2}f^{\prime\prime}(x)=c\neq0;
\]
without loss of any generality, we can suppose that $c>0.$ Then there exists
$x^{\ast}\in(0,1)$ such that%
\[
(1-x^{2})^{2}f^{\prime\prime}(x)\geq r:=\dfrac{c}{2}\text{ on }[x^{\ast},1);
\]
that is,
\[
f^{\prime\prime}(x)\geq\dfrac{R}{(1-x)^{2}}\text{ on }[x^{\ast},1)
\]
for some $R>0.$ Integrating this inequality over $[x^{\ast},x]\subset\lbrack
x^{\ast},1)$ yields
\begin{align*}
f^{\prime}(x)  &  \geq R\int_{x^{\ast}}^{x}\dfrac{dt}{(1-t)^{2}}+f^{\prime
}(x^{\ast})\\
&  =\dfrac{R}{1-x}+f^{\prime}(x^{\ast})-\frac{R}{1-x^{\ast}}.
\end{align*}
Consequently,
\begin{align*}
(1-x^{2})f^{\prime}(x)  &  \geq R(1+x)+(1-x^{2})\left(  f^{\prime}(x^{\ast
})-\frac{R}{1-x^{\ast}}\right) \\
&  \rightarrow2R>0\quad(\text{as }x\rightarrow1)
\end{align*}
contradicting $($\ref{A^2=S-13}$).$ It follows that $($\ref{A^2=S-18}$)$ holds
and this proves $($\ref{A^2=S-15}$).$ Combining $($\ref{A^2=S-5}$),$
$($\ref{A^2=S-6}$),$ $($\ref{A^2=S-7}$),$ $($\ref{A^2=S-10}$),$ $($%
\ref{A^2=S-11}$),$ $($\ref{A^2=S-12}$)$ and $($\ref{A^2=S-15}$),$ we see that
$f\in\mathcal{D}(A^{2})$ implies $f\in\mathcal{D}(S)$. This completes the
proof of the theorem.
\end{proof}

\section{Proof of Theorem \ref{Main Theorem 2}\label{Proof of Main Theorem 2}}

In order to prove Theorem \ref{Main Theorem 2}, we first need to establish
three preliminary facts, the first of which is the following result.

\begin{lemma}
If $f\in\mathcal{D}(S),$ then
\begin{equation}
\dfrac{1}{1-x^{2}}\left(  (1-x^{2})^{2}f^{\prime\prime}(x)\right)  ^{\prime
}\in L^{2}(-1,1). \label{L^2 condition}%
\end{equation}

\end{lemma}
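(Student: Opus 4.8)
The plan is to reduce the claim to two facts that are already in hand: $\ell^{\prime}[f]\in L^{2}(-1,1)$ and $f^{\prime}\in L^{2}(-1,1)$. The starting point is the identity $(\ref{A^2=S-1})$, which I would rearrange to isolate the quantity of interest. Since $(\ref{A^2=S-1})$ reads $-(1-x^{2})\ell^{\prime}[f](x)=\left((1-x^{2})^{2}f^{\prime\prime}(x)\right)^{\prime}-2(1-x^{2})f^{\prime}(x)$, solving for the derivative term gives
\[
\left((1-x^{2})^{2}f^{\prime\prime}(x)\right)^{\prime}=-(1-x^{2})\ell^{\prime}[f](x)+2(1-x^{2})f^{\prime}(x).
\]
Dividing through by the factor $1-x^{2}$, which is positive on $(-1,1)$, yields
\[
\frac{1}{1-x^{2}}\left((1-x^{2})^{2}f^{\prime\prime}(x)\right)^{\prime}=-\ell^{\prime}[f](x)+2f^{\prime}(x).
\]
Thus the lemma is equivalent to the assertion that $-\ell^{\prime}[f]+2f^{\prime}\in L^{2}(-1,1)$.

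To finish I would assemble the two $L^{2}$ memberships. First, the computation in the proof of Theorem $\ref{Main Theorem 1}$ (see Section $\ref{Proof of Main Theorem 1}$) already establishes, via the CE Theorem applied on $[0,1)$ and on $(-1,0]$, that $\ell^{\prime}[f]\in L^{2}(-1,1)$ for every $f\in\mathcal{D}(S)$. Second, by Theorem $\ref{Main Theorem 1}$ we have $\mathcal{D}(S)=\mathcal{D}(A^{2})\subset\mathcal{D}(A)$, so $f\in\mathcal{D}(A)$, and then Theorem $\ref{ELM}$, part (ii), gives $f^{\prime}\in L^{2}(-1,1)$. Since $L^{2}(-1,1)$ is a vector space, the combination $-\ell^{\prime}[f]+2f^{\prime}$ lies in $L^{2}(-1,1)$, which is precisely what the displayed identity requires.

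The only genuinely analytic ingredient is the membership $\ell^{\prime}[f]\in L^{2}(-1,1)$; everything else is algebra and the vector-space structure of $L^{2}$. This is exactly the step where the real work lives, and it has already been carried out in Section $\ref{Proof of Main Theorem 1}$ using the boundary condition $[f,1]_{2}(\pm1)=0$ --- equivalently $\lim_{x\to\pm1}(1-x^{2})\ell^{\prime}[f](x)=0$ --- to kill the boundary term arising when $\ell^{2}[f]=-\left((1-x^{2})\ell^{\prime}[f]\right)^{\prime}$ is integrated. For a self-contained argument one could simply repeat it: applying the CE Theorem with $\varphi(x)=1/(1-x^{2})$, $\psi\equiv1$, $w\equiv1$ on $[0,1)$ shows $\frac{1}{1-x^{2}}\int_{x}^{1}\ell^{2}[f](t)\,dt\in L^{2}[0,1)$, and the vanishing boundary limit collapses this expression to $\ell^{\prime}[f]$, giving $\ell^{\prime}[f]\in L^{2}[0,1)$; the symmetric argument on $(-1,0]$ then completes it. So no new obstacle appears beyond those already handled in Theorem $\ref{Main Theorem 1}$.
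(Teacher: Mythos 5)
Your proposal is correct and is essentially the paper's own argument: the paper applies the CE Theorem on $[0,1)$ and $(-1,0]$ with $\varphi=\pm 1/(1-x^{2})$, $\psi\equiv 1$, $w\equiv 1$, uses the boundary condition $[f,1]_{2}(\pm1)=0$ to kill the boundary term, and arrives at $\tfrac{1}{1-x^{2}}\left((1-x^{2})^{2}f^{\prime\prime}\right)^{\prime}-2f^{\prime}\in L^{2}(-1,1)$ (which is exactly $-\ell^{\prime}[f]$), then adds $2f^{\prime}\in L^{2}(-1,1)$. Your reformulation via the identity $(\ref{A^2=S-1})$ and the already-established membership $\ell^{\prime}[f]\in L^{2}(-1,1)$ from Section \ref{Proof of Main Theorem 1} is the same computation, merely repackaged.
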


\begin{proof}
Let $f\in\mathcal{D}(S)=\mathcal{D}(A^{2})$ so\ $f^{\prime}\in L^{2}(-1,1)$,
$[f,1]_{2}(\pm1)=0$ and $\ell^{2}[f]\in L^{2}(-1,1).$ We apply the CE Theorem
on $[0,1)$ with $\psi(x)=1$, $\varphi(x)=-1/(1-x^{2})$ and $w(x)=1. $ These
functions satisfy the conditions of this theorem on $[0,1)$ so%
\[
\frac{-1}{1-x^{2}}\int_{x}^{1}\ell^{2}[f](t)dt\in L^{2}(0,1).
\]
However, using (\ref{[f,1]_2}), a calculation shows
\begin{align*}
\dfrac{-1}{1-x^{2}}\int_{x}^{1}\ell^{2}[f](t)dt  &  =\dfrac{-1}{1-x^{2}}%
\int_{x}^{1}\left[  \left(  \left(  1-t^{2}\right)  ^{2}f^{\prime\prime
}(t)\right)  ^{\prime\prime}-2\left(  (1-t^{2})f^{\prime}(t)\right)  ^{\prime
}\right]  dt\\
&  =\dfrac{-1}{1-x^{2}}\left[  \lim_{x\rightarrow1}\left(  \left(  \left(
1-x^{2}\right)  ^{2}f^{\prime\prime}(x)\right)  ^{\prime}-2(1-x^{2})f^{\prime
}(x)\right)  \right] \\
&  \qquad\qquad+\frac{1}{1-x^{2}}\left[  \left(  (1-x^{2})^{2}f^{\prime\prime
}(x)\right)  ^{\prime}-2(1-x^{2})f^{\prime}(x)\right] \\
&  =\dfrac{-1}{1-x^{2}}\left[  \lim_{x\rightarrow1}\text{ }[f,1]_{2}%
(x)-\left(  (1-x^{2})^{2}f^{\prime\prime}(x)\right)  ^{\prime}+2(1-x^{2}%
)f^{\prime}(x)\right] \\
&  =\dfrac{1}{1-x^{2}}\left(  (1-x^{2})^{2}f^{\prime\prime}(x)\right)
^{\prime}-2f^{\prime}(x)\text{.}%
\end{align*}
A similar calculation shows that
\[
\dfrac{1}{1-x^{2}}\left(  (1-x^{2})^{2}f^{\prime\prime}(x)\right)  ^{\prime
}-2f^{\prime}(x)\in L^{2}(-1,0]
\]
and hence%
\[
\dfrac{1}{1-x^{2}}\left(  (1-x^{2})^{2}f^{\prime\prime}(x)\right)  ^{\prime
}-2f^{\prime}(x)\in L^{2}(-1,1).
\]
Since $f^{\prime}\in L^{2}(-1,1),$ we see, by linearity, that
\[
\dfrac{1}{1-x^{2}}\left(  (1-x^{2})^{2}f^{\prime\prime}(x)\right)  ^{\prime
}\in L^{2}(-1,1).
\]

\end{proof}

\begin{lemma}
\label{A_2 limit}For $f\in\mathcal{D}(S),$ we have
\begin{equation}
\lim_{x\rightarrow\pm1}(1-x^{2})^{2}f^{\prime\prime}(x)=0.
\label{A_2 limit = 0}%
\end{equation}

\end{lemma}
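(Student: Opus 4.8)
The plan is to extract the desired limit purely from the two endpoint conditions $[f,1]_2(\pm1)=0$ and $[f,x]_2(\pm1)=0$ that define $\mathcal{D}(S)$, together with the membership $f\in L^{2}(-1,1)$. The starting point is the identity (\ref{[f,x]_2}), which expresses $[f,x]_2(x)=x[f,1]_2(x)-(1-x^{2})^{2}f^{\prime\prime}(x)+2(1-x^{2})f(x)$. Letting $x\to\pm1$ and using that both sesquilinear forms vanish at the endpoints, I would obtain
\[
\lim_{x\to\pm1}\left((1-x^{2})^{2}f^{\prime\prime}(x)-2(1-x^{2})f(x)\right)=0;
\]
this is exactly the relation already recorded as (\ref{Crucial Limit}) in the proof of Theorem \ref{Main Theorem 1}.

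Second, I would argue that $\lim_{x\to\pm1}(1-x^{2})^{2}f^{\prime\prime}(x)$ exists and is finite. This follows from the identity $\left((1-x^{2})^{2}f^{\prime\prime}\right)^{\prime\prime}=\ell^{2}[f]-2\ell\lbrack f]$, a direct consequence of (\ref{Legendre DE}) and (\ref{Square of Legendre DE}): since $f\in\mathcal{D}(S)$ forces $\ell^{2}[f]\in L^{2}(-1,1)$ and, by the argument in Section \ref{Proof of Main Theorem 1}, also $\ell\lbrack f]\in L^{2}(-1,1)$, the function $\left((1-x^{2})^{2}f^{\prime\prime}\right)^{\prime\prime}$ lies in $L^{2}(-1,1)\subset L^{1}(-1,1)$. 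Integrating twice shows that $\left((1-x^{2})^{2}f^{\prime\prime}\right)^{\prime}$ and $(1-x^{2})^{2}f^{\prime\prime}$ are absolutely continuous on $[-1,1]$, so the limit in (\ref{Limit 1}) exists and is finite. Combining this with the first step shows that $r:=\lim_{x\to\pm1}(1-x^{2})f(x)$ also exists and is finite.

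The heart of the proof is then to show $r=0$, and here is where the hypothesis $f\in L^{2}(-1,1)$ is indispensable. I would argue by contradiction at the endpoint $x=1$ (the case $x=-1$ being symmetric): if $r\neq 0$, say $r>0$, then $(1-x^{2})f(x)\geq r/2$ on some $[x^{\ast},1)$, whence $\left\vert f(x)\right\vert ^{2}\geq (r/2)^{2}(1-x^{2})^{-2}$ and $\int_{x^{\ast}}^{1}\left\vert f\right\vert ^{2}dx\geq (r/2)^{2}\int_{x^{\ast}}^{1}(1-x^{2})^{-2}\,dx=\infty$, contradicting $f\in L^{2}(-1,1)$. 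Thus $r=0$, i.e. $\lim_{x\to\pm1}(1-x^{2})f(x)=0$, and feeding this back into the first-step limit yields $\lim_{x\to\pm1}(1-x^{2})^{2}f^{\prime\prime}(x)=0$, which is (\ref{A_2 limit = 0}).

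The main obstacle is the second step---verifying that $(1-x^{2})^{2}f^{\prime\prime}$ is genuinely absolutely continuous up to the endpoints---since it rests on $\ell\lbrack f]\in L^{2}(-1,1)$, which is not immediate from the definition of $\mathcal{D}(S)$ but is a nontrivial consequence of the Chisholm--Everitt inequality (Theorem \ref{CE Theorem}) established in the proof of Theorem \ref{Main Theorem 1}. In fact, because Theorem \ref{Main Theorem 1} is already available, the cleanest route is simply to observe that the chain of limits running from (\ref{Crucial Limit}) through (\ref{Limit 1}) in that proof establishes (\ref{A_2 limit = 0}) verbatim; this lemma may therefore be regarded as a packaging of a fact obtained en route to $\mathcal{D}(A^{2})=\mathcal{D}(S)$.
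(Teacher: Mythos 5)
Your proof is correct, and in fact it is precisely the chain of limits (\ref{Crucial Limit})--(\ref{Crucial Limit 2}) that the paper already runs inside the proof of Theorem \ref{Main Theorem 1}; you are right that the lemma is a repackaging of that material. The paper's own proof of the lemma, however, takes a shorter route at the one point where your argument does real work: to get $\lim_{x\rightarrow\pm1}(1-x^{2})f(x)=0$, the paper simply invokes the already-established identity $\mathcal{D}(S)=\mathcal{D}(A^{2})$, so that $f\in\mathcal{D}(A)$ and hence $f\in AC[-1,1]$ by Theorem \ref{ELM}(v), which gives the vanishing of $(1-x^{2})f(x)$ immediately. You instead re-derive this from scratch: you first secure the \emph{existence} of $\lim_{x\rightarrow\pm1}(1-x^{2})^{2}f^{\prime\prime}(x)$ via $\left((1-x^{2})^{2}f^{\prime\prime}\right)^{\prime\prime}=\ell^{2}[f]-2\ell[f]\in L^{1}(-1,1)$ (which requires the Chisholm--Everitt argument to know $\ell[f]\in L^{2}(-1,1)$), deduce that $r:=\lim_{x\rightarrow\pm1}(1-x^{2})f(x)$ exists, and then kill $r$ by the $\int(1-x^{2})^{-2}dx=\infty$ contradiction with $f\in L^{2}(-1,1)$. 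Both arguments are sound and both appear in the paper; yours avoids Theorem \ref{ELM}(v) but pays for it with the CE machinery and the existence-of-limit step, while the paper's lemma proof is a three-line consequence of Theorem \ref{Main Theorem 1}. After either route, the closing step is identical: combine $[f,1]_{2}(\pm1)=0$, $[f,x]_{2}(\pm1)=0$ and the identity (\ref{[f,x]_2}) to isolate $\lim_{x\rightarrow\pm1}(1-x^{2})^{2}f^{\prime\prime}(x)=0$.
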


\begin{proof}
Let $f\in\mathcal{D}(S)=\mathcal{D}(A^{2}).$ Since $f\in\mathcal{D}(A),$ we
have $f\in AC[-1,1]$ so%
\begin{equation}
\lim_{x\rightarrow\pm1}(1-x^{2})f(x)=0. \label{Simple Limit}%
\end{equation}
Furthermore, we have
\begin{equation}
0=\lim_{x\rightarrow\pm1}[f,1]_{2}(x)=\lim_{x\rightarrow\pm1}\left(  \left(
(1-x^{2})^{2}f^{\prime\prime}(x)\right)  ^{\prime}-2(1-x^{2})f^{\prime
}(x)\right)  . \label{D(S) limit}%
\end{equation}
Consequently, from $($\ref{Simple Limit}$)$ and $($\ref{D(S) limit}$),$ we
find that
\[
0=\lim_{x\rightarrow\pm1}[f,x]_{2}(x)=\lim_{x\rightarrow\pm1}\left(
x[f,1]_{2}(x)-(1-x^{2})^{2}f^{\prime\prime}(x)+2(1-x^{2})f(x)\right)
=-\lim_{x\rightarrow\pm1}(1-x^{2})^{2}f^{\prime\prime}(x).
\]

\end{proof}

The last preliminary result is the following theorem. Since $\mathcal{D}%
(S)=\mathcal{D}(A^{2}),$ this next result generalizes the well-known result
for $\mathcal{D}(A)$ established in Theorem \ref{ELM}, part (v).

\begin{theorem}
\label{f'' in L^2}If $f\in\mathcal{D}(S),$ then%
\[
f^{\prime\prime}\in L^{2}(-1,1).
\]
Moreover,
\begin{equation}
pf^{\prime\prime}\in L^{2}(-1,1)\label{pf'' is in L^2(-1,1)}%
\end{equation}
for any bounded, Lebesgue measurable function $p$, including any polynomial$.$
\end{theorem}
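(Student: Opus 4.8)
The plan is to prove $f'' \in L^2(-1,1)$ for $f \in \mathcal{D}(S)$ by bootstrapping the information already accumulated about $f$ and then invoking the CE Theorem one more time. First I would record the facts available for $f \in \mathcal{D}(S) = \mathcal{D}(A^2)$: from Theorem \ref{ELM}(ii) we have $f' \in L^2(-1,1)$; from the preceding Lemma we have $\tfrac{1}{1-x^2}\bigl((1-x^2)^2 f''(x)\bigr)' \in L^2(-1,1)$; and from Lemma \ref{A_2 limit} we have $\lim_{x \to \pm 1}(1-x^2)^2 f''(x) = 0$. The strategy is to represent $(1-x^2)^2 f''(x)$ as an integral of its derivative, using the vanishing boundary limit, and then show that dividing by $(1-x^2)^2$ lands back in $L^2$.

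The key step is to apply the CE Theorem on $[0,1)$ (and symmetrically on $(-1,0]$) with a weight and functions chosen so that the operator $A$ or $B$ of that theorem reproduces $f''$ up to $L^2$-controlled terms. Concretely, writing $g(x) := \tfrac{1}{1-x^2}\bigl((1-x^2)^2 f''(x)\bigr)' \in L^2$, and using $\lim_{x\to 1}(1-x^2)^2 f''(x) = 0$, I would integrate to get
\[
(1-x^2)^2 f''(x) = -\int_x^1 (1-t^2)\,g(t)\,dt,
\]
so that
\[
f''(x) = \frac{-1}{(1-x^2)^2}\int_x^1 (1-t^2)\,g(t)\,dt.
\]
I would then set up the CE Theorem with $\varphi(x) = 1/(1-x^2)^2$, a factor $\psi(t) = 1-t^2$ absorbed into the integrand, and weight $w(x) = 1$, verifying the local integrability and one-sided $L^2$ conditions near the endpoints and checking that the kernel $K(x)$ in $(\ref{5.1})$ is bounded on $(0,1)$; by $(\ref{eq1.15})$ this yields $f'' \in L^2[0,1)$. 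A symmetric argument on $(-1,0]$ completes $f'' \in L^2(-1,1)$.

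The main obstacle I anticipate is the bookkeeping in the CE setup: the integrand involves the product $(1-t^2)\,g(t)$ rather than $g(t)$ alone, so I must either fold the $(1-t^2)$ factor into a modified $\psi$ and re-verify hypothesis (ii) of Theorem \ref{CE Theorem} near both endpoints, or absorb it into the function $f$ to which the operator is applied while confirming that this modified function still lies in the correct $L^2$ space. Checking that $K := \sup_x K(x) < \infty$ — that is, that $\int_0^x (1-t^2)^{-4}\,dt$ times $\int_x^1 (\text{appropriate }\psi)^2\,dt$ stays bounded as $x \to 1$ — is the delicate quantitative point, since the singularity of $\varphi = (1-x^2)^{-2}$ is fairly strong. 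Finally, the extension $(\ref{pf'' is in L^2(-1,1)})$ is immediate: if $f'' \in L^2(-1,1)$ and $p$ is bounded and measurable, then $\int_{-1}^1 |p\,f''|^2 \le (\sup|p|)^2 \int_{-1}^1 |f''|^2 < \infty$, so $p f'' \in L^2(-1,1)$; since any polynomial is bounded on the compact interval $[-1,1]$, the polynomial case follows as a special instance.
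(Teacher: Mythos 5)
Your proposal is correct and follows essentially the same route as the paper: the paper also applies the CE Theorem on $[0,1)$ with $\varphi(x)=1/(1-x^{2})^{2}$, $\psi(x)=1-x^{2}$, $w(x)=1$ to the function $g=\tfrac{1}{1-x^{2}}\left((1-x^{2})^{2}f''\right)'\in L^{2}(-1,1)$ from the preceding Lemma, and uses Lemma \ref{A_2 limit} to identify the resulting integral operator output with $f''$ itself. The boundedness of $K$ that you flag as the delicate point does hold, since $\int_{0}^{x}(1-t^{2})^{-4}dt\sim C(1-x)^{-3}$ while $\int_{x}^{1}(1-t^{2})^{2}dt\sim C(1-x)^{3}$ as $x\rightarrow1$, and your argument for the bounded-multiplier extension matches the paper's.
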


\begin{proof}
Once we establish $f^{\prime\prime}\in L^{2}(-1,1),$ the statement in
$($\ref{pf'' is in L^2(-1,1)}$),$ for any bounded measurable function, follows
clearly. Let $f\in\mathcal{D}(S).$ We prove that $f^{\prime\prime}\in
L^{2}(0,1);$ a similar proof will establish $f^{\prime\prime}\in L^{2}(-1,0)$
and prove the theorem. We again use the CE Theorem with $\psi(x)=1-x^{2}$,
$\varphi(x)=1/(1-x^{2})^{2}$ and $w(x)=1$ on $[0,1).$ Indeed, from the CE
Theorem and $($\ref{L^2 condition}$)$, we find that%
\[
\dfrac{-1}{(1-x^{2})^{2}}\int_{x}^{1}(1-t^{2})\left(  \dfrac{1}{1-t^{2}%
}\left(  (1-t^{2})^{2}f^{\prime\prime}(t)\right)  ^{\prime}\right)  dt\in
L^{2}(0,1).
\]
However, from Lemma \ref{A_2 limit},
\begin{align*}
&  \dfrac{-1}{(1-x^{2})^{2}}\int_{x}^{1}(1-t^{2})\left(  \dfrac{1}{1-t^{2}%
}\left(  (1-t^{2})^{2}f^{\prime\prime}(t)\right)  ^{\prime}\right)  dt\\
&  =\frac{-1}{(1-x^{2})^{2}}\left(  \lim_{x\rightarrow1}(1-x^{2})^{2}%
f^{\prime\prime}(x)-(1-x^{2})^{2}f^{\prime\prime}(x)\right) \\
&  =f^{\prime\prime}(x).
\end{align*}

\end{proof}

We are now in position to prove Theorem \ref{Main Theorem 2}, specifically
$B=\mathcal{D}(S),$ where $B$ is defined in (\ref{LD Representation}) and
$\mathcal{D}(S)$ is given in (\ref{D(S)}).

\begin{proof}
\underline{$B\subset\mathcal{D}(S)$}:\medskip

\noindent Let $f\in B.$ We assume that $f$ is real-valued on $(-1,1).$ We
begin by showing, using the CE Theorem, that the condition
\[
(1-x^{2})^{2}f^{(4)}\in L^{2}(-1,1)
\]
implies the two conditions
\begin{equation}
(1-x^{2})f^{\prime\prime\prime}\in L^{2}(-1,1) \label{f''' condition}%
\end{equation}
and%
\begin{equation}
f^{\prime\prime}\in L^{2}(-1,1). \label{f'' condition}%
\end{equation}
Regarding $($\ref{f''' condition}$),$ we will show
\begin{equation}
(1-x^{2})f^{\prime\prime\prime}\in L^{2}(0,1); \label{f''' condition on (0,1)}%
\end{equation}
a similar proof will yield
\begin{equation}
(1-x^{2})f^{\prime\prime\prime}\in L^{2}(-1,0)
\label{f''' condition on (-1,0)}%
\end{equation}
and, together, they establish $($\ref{f''' condition}$).$ Since $(1-x^{2}%
)^{2}f^{(4)}\in L^{2}(0,1),$ we use the CE Theorem on $[0,1)$ with%
\[
\varphi(x)=(1-x^{2})^{-2},\text{ }\psi(x)=1-x^{2}\text{ and }w(x)=1\quad
(x\in\lbrack0,1)).
\]
It follows that
\begin{align*}
(1-x^{2})f^{\prime\prime\prime}(x)  &  =(1-x^{2})\int_{0}^{x}\dfrac
{1}{(1-t^{2})^{2}}(1-t^{2})^{2}f^{(4)}(t)dt+f^{\prime\prime\prime}%
(0)(1-x^{2})\\
&  \in L^{2}(0,1).
\end{align*}
To see $($\ref{f'' condition}$),$ we apply the CE Theorem once again on
$[0,1)$ to prove that
\[
f^{\prime\prime}\in L^{2}(0,1);
\]
a similar argument will show that $f^{\prime\prime}\in L^{2}(-1,0).$ To this
end, let
\[
\varphi(x)=(1-x^{2})^{-1},\text{ }\psi(x)=1\text{ and }w(x)=1\quad(x\in
\lbrack0,1)).
\]
In this case, we see that%
\[
f^{\prime\prime}(x)=\int_{0}^{x}\dfrac{1}{1-t^{2}}\left(  (1-t^{2}%
)f^{\prime\prime\prime}(t)\right)  dt+f^{\prime\prime}(0)\in L^{2}(0,1).
\]
Consequently, we see that
\[
f,f^{\prime}\in AC[-1,1]\subset L^{2}(-1,1).
\]
Moreover, it is clear that $g(x)(1-x^{2})f^{\prime\prime\prime}(x),$
$g(x)f^{\prime\prime}(x)$ and $g(x)f^{\prime}(x)$ all belong to $L^{2}(-1,1)$
for any bounded, measurable function $g$ on $(-1,1).$ Hence%
\begin{align*}
\ell^{2}[f](x)  &  =(1-x^{2})^{2}f^{(4)}(x)-8x(1-x^{2})f^{\prime\prime\prime
}(x)+(14x^{2}-6)f^{\prime\prime}(x)+4xf^{\prime}(x)\\
&  \in L^{2}(-1,1)
\end{align*}
and, in particular,
\begin{equation}
4xf^{\prime}\in L^{2}(-1,1). \label{2xf' is in L^(-1,1)}%
\end{equation}
It remains to show that
\begin{equation}
\lim_{x\rightarrow\pm1}[f,1]_{2}(x)=\lim_{x\rightarrow\pm1}[f,x]_{2}(x)=0.
\label{Two limits}%
\end{equation}
Since $1,x\in$ $\Delta_{2,\mathrm{\max}},$ we see from Green's formula in
$($\ref{Green's Formula for the Legendre Square}$)$ that the limits in
$($\ref{Two limits}$)$ both exist and are finite. Now $f^{\prime}\in
AC[-1,1]\ $so%
\[
\lim_{x\rightarrow\pm1}(1-x^{2})f^{\prime}(x)=0.
\]
Consequently,
\begin{align*}
\lim_{x\rightarrow\pm1}[f,1]_{2}(x)  &  =\lim_{x\rightarrow\pm1}\left(
((1-x^{2})^{2}f^{\prime\prime}(x))^{\prime}-2(1-x^{2})f^{\prime}(x)\right) \\
&  =\lim_{x\rightarrow\pm1}((1-x^{2})^{2}f^{\prime\prime}(x))^{\prime}.
\end{align*}
We claim that
\begin{equation}
\lim_{x\rightarrow1}((1-x^{2})^{2}f^{\prime\prime}(x))^{\prime}=0; \label{BC1}%
\end{equation}
a similar proof will establish%
\[
\lim_{x\rightarrow-1}((1-x^{2})^{2}f^{\prime\prime}(x))^{\prime}=0.
\]
Suppose to the contrary that%
\[
\lim_{x\rightarrow1}((1-x^{2})^{2}f^{\prime\prime}(x))^{\prime}=c\neq0;
\]
we can assume that $c>0.$ It follows that there exists $x^{\ast}\in(0,1)$ such
that%
\begin{equation}
((1-x^{2})^{2}f^{\prime\prime}(x))^{\prime}\geq r:=\frac{c}{2}>0\quad
(x\in\lbrack x^{\ast},1)). \label{B_2 contained in D(S) inequality}%
\end{equation}
Note that since
\[
((1-x^{2})^{2}f^{\prime\prime}(x))^{\prime}=(1-x^{2})^{2}f^{\prime\prime
\prime}(x)-4x(1-x^{2})f^{\prime\prime}(x),
\]
we see that the inequality in $($\ref{B_2 contained in D(S) inequality}$)$ can
be rewritten as%
\begin{equation}
(1-x^{2})f^{\prime\prime\prime}(x)-4xf^{\prime\prime}(x)\geq\frac{r}{1-x^{2}%
}\text{ on }[x^{\ast},1). \label{Key inequality}%
\end{equation}
However, from (\ref{pf'' is in L^2(-1,1)}) and (\ref{f''' condition}), we see
that%
\[
(1-x^{2})f^{\prime\prime\prime}(x)-4xf^{\prime\prime}(x)\in L^{2}(-1,1)
\]
so the inequality in (\ref{Key inequality}) is not possible. Hence
$($\ref{BC1}$)$ is established and thus%
\[
\lim_{x\rightarrow\pm1}[f,1]_{2}(x)=0.
\]
We now show that%
\[
\lim_{x\rightarrow\pm1}[f,x]_{2}(x)=0.
\]
Since the argument for $x\rightarrow-1$ mirrors the proof for $x\rightarrow1,$
we will only show that
\begin{equation}
\lim_{x\rightarrow1}[f,x]_{2}(x)=0. \label{BC2}%
\end{equation}
Now, since $f\in AC[-1,1],$ we see that $\lim_{x\rightarrow1}(1-x^{2})f(x)=0;$
moreover, using $($\ref{BC1}$),$
\[
\lim_{x\rightarrow1}[f,x]_{2}(x)=\lim_{x\rightarrow1}\left(  x[f,1]_{2}%
(x)-(1-x^{2})^{2}f^{\prime\prime}(x)+2(1-x^{2})f(x)\right)  =-\lim
_{x\rightarrow1}(1-x^{2})^{2}f^{\prime\prime}(x).
\]
Suppose that%
\[
\lim_{x\rightarrow1}(1-x^{2})^{2}f^{\prime\prime}(x)=d\neq0;
\]
we can assume that $d>0.$ Then, with possibly different $x^{\ast}$ as given in
the above argument, there exists a $x^{\ast}\in(0,1)$ with%
\[
(1-x^{2})^{2}f^{\prime\prime}(x)\geq d^{\prime}:=\frac{d}{2}\quad(x\in\lbrack
x^{\ast},1)).
\]
Hence%
\[
f^{\prime\prime}(x)\geq\frac{d^{\prime}}{(1-x^{2})^{2}}\quad(x\in\lbrack
x^{\ast},1)).
\]
However, this implies that $f^{\prime\prime}\notin L^{2}(0,1),$ contradicting
$($\ref{f'' condition}$)$. Thus $($\ref{BC2}$)$ is established and this
completes the proof that $B\subset\mathcal{D}(S).\medskip$

\noindent\underline{$\mathcal{D}(S)\subset B$}:\medskip\newline\noindent Let
$f\in\mathcal{D}(S)$. We need only to show that
\begin{equation}
(1-x^{2})^{2}f^{(4)}\in L^{2}(-1,1). \label{Theorem condition}%
\end{equation}
Since, by Theorem \ref{f'' in L^2}, $f^{\prime\prime}\in L^{2}(-1,1),$ we see
that $gf^{\prime\prime}\in L^{2}(-1,1)$ for any bounded, measurable function
$g$ on $(-1,1).$ In particular, it is the case that%
\begin{equation}
4xf^{\prime\prime}\in L^{2}(-1,1) \label{L^2 condition 1}%
\end{equation}
and%
\begin{equation}
(14x^{2}-6)f^{\prime\prime}\in L^{2}(-1,1). \label{L^2 condition again}%
\end{equation}
By $($\ref{L^2 condition}$)$,
\begin{equation}
(1-x^{2})f^{\prime\prime\prime}(x)-4xf^{\prime\prime}(x)=\dfrac{1}{1-x^{2}%
}\left(  (1-x^{2})^{2}f^{\prime\prime}(x)\right)  ^{\prime}\in L^{2}(-1,1).
\label{Another L^2 condition}%
\end{equation}
By linearity, it follows from $($\ref{L^2 condition 1}$)$ and $($%
\ref{Another L^2 condition}$)$ that%
\[
(1-x^{2})f^{\prime\prime\prime}\in L^{2}(-1,1).
\]
Consequently, $g(1-x^{2})f^{\prime\prime\prime}\in L^{2}(-1,1)$ for every
bounded, measurable function $g$ on $(-1,1);$ in particular,
\begin{equation}
8x(1-x^{2})f^{\prime\prime\prime}\in L^{2}(-1,1).
\label{Yet another L^2 condition}%
\end{equation}
Furthermore, since $f^{\prime}\in L^{2}(-1,1),$ it follows that
\begin{equation}
4xf^{\prime}(x)\in L^{2}(-1,1). \label{Yet yet another L^2 condition}%
\end{equation}
Finally, since $\ell^{2}[f]\in L^{2}(-1,1),$ we see from $($%
\ref{Square of Legendre DE}$),$ $($\ref{L^2 condition again}$)$,
$($\ref{Yet another L^2 condition}$)$ and $($%
\ref{Yet yet another L^2 condition}$)$ that%
\begin{align*}
(1-x^{2})^{2}f^{(4)}  &  =\ell^{2}[f]+8x(1-x^{2})f^{\prime\prime\prime
}-(14x^{2}-6)f^{\prime\prime}-4xf^{\prime}\\
&  \in L^{2}(-1,1).
\end{align*}
This establishes $($\ref{Theorem condition}$)$ and proves $\mathcal{D}%
(S)\subset B.$ This completes the proof of Theorem \ref{Main Theorem 2}.
\end{proof}

\section{Proof of Theorem \ref{Main Theorem 3}\label{Proof of Main Theorem 3}}

We now prove Theorem \ref{Main Theorem 3}, namely $\mathcal{D}(S)=D,$ where
$\mathcal{D}(S)$ is given in (\ref{D(S)}) and $D$ is defined in
(\ref{New Domain D}).

\begin{proof}
Since functions $f$ in both $\mathcal{D}(S)$ and $D$ satisfy the `maximal
domain' conditions $f^{(j)}\in AC_{\mathrm{loc}}(-1,1)$ $(j=0,1,2,3),$ $f\in
L^{2}(-1,1)$ and $\ell^{2}[f]\in L^{2}(-1,1),$ we need only to prove that the
other properties in their definitions hold.

\noindent\underline{$\mathcal{D}(S)\subset D$}$:\medskip$\newline Let
$f\in\mathcal{D}(S)=\mathcal{D}(A^{2}).$ Then $f\in\mathcal{D}(A)$ so
\begin{equation}
\lim_{x\rightarrow\pm1}(1-x^{2})f^{\prime}(x)=0. \label{NC-1}%
\end{equation}
Moreover,
\begin{align}
0  &  =[f,1]_{2}(\pm1)\nonumber\\
&  =\lim_{x\rightarrow\pm1}\left(  \left(  (1-x^{2})^{2}f^{\prime\prime
}(x)\right)  ^{\prime}-2(1-x^{2})f^{\prime}(x)\right) \nonumber\\
&  =\lim_{x\rightarrow\pm1}\left(  (1-x^{2})^{2}f^{\prime\prime}(x)\right)
^{\prime}. \label{NC-2}%
\end{align}
The identities in $($\ref{NC-1}$)$ and $($\ref{NC-2}$)$ prove that
$\mathcal{D}(S)\subset D.$\newline\noindent\underline{$D\subset\mathcal{D}%
(S)$}$:\medskip$\newline Let $f\in D.$ Clearly,
\begin{align}
\lbrack f,1]_{2}(\pm1)  &  =\lim_{x\rightarrow\pm1}\left(  \left(
(1-x^{2})^{2}f^{\prime\prime}(x)\right)  ^{\prime}-2(1-x^{2})f^{\prime
}(x)\right) \label{NC-3}\\
&  =0\nonumber
\end{align}
so we need to show that
\begin{equation}
\lim_{x\rightarrow\pm1}[f,x]_{2}(\pm1)=0. \label{NC-4}%
\end{equation}
We remark that the limits in $($\ref{NC-4}$)$ exist (by Green's formula) and
are finite. \newline\underline{Claim:} $\ell^{\prime}[f]\in L^{2}%
(-1,1).$\newline To see this, recall the two representations of $\ell
^{2}[\cdot]$: the one given in $($\ref{Square of Legendre DE}$)$ and the one
given in $($\ref{ell^2 identity}$).$ Since $\ell^{2}[f]\in L^{2}(-1,1)$, we
apply the CE Theorem on $[0,1)$ with $\varphi(x)=(1-x^{2})^{-1},$ $\psi(x)=1$
and $w(x)=1$ to obtain%
\[
\frac{1}{1-x^{2}}\int_{x}^{1}\ell^{2}[f](t)dt\in L^{2}(0,1).
\]
However, from $($\ref{Square of Legendre DE}$)$ and $($\ref{ell^2 identity}%
$),$ we see that
\begin{align*}
&  \frac{1}{1-x^{2}}\int_{x}^{1}\ell^{2}[f](t)dt\\
&  =\frac{1}{1-x^{2}}\left(  \lim_{x\rightarrow1}\left(  \left(  (1-x^{2}%
)^{2}f^{\prime\prime}(x)\right)  ^{\prime}-2(1-x^{2})f^{\prime}(x)\right)
+(1-x^{2})\ell^{\prime}[f](x)\right) \\
&  =\ell^{\prime}[f](x)\text{ by }(\text{\ref{NC-3}});
\end{align*}
a similar calculation shows that $\ell^{\prime}[f]\in L^{2}(-1,0).$ It follows
that $\ell\lbrack f]\in AC[-1,1]\subset L^{2}(-1,1).$ We again apply the CE
Theorem on $[0,1)$ with $\varphi(x)=(1-x^{2})^{-1},$ $\psi(x)=1$ and $w(x)=1$
to obtain%
\[
\frac{1}{1-x^{2}}\int_{x}^{1}\ell\lbrack f](t)dt\in L^{2}(0,1).
\]
Another calculation shows that
\begin{align*}
&  \frac{1}{1-x^{2}}\int_{x}^{1}\ell\lbrack f](t)dt=\frac{-1}{1-x^{2}}\int
_{x}^{1}\left(  (1-t^{2})f^{\prime}(t)\right)  ^{\prime}dt\\
&  =\frac{-1}{1-x^{2}}\left(  \lim_{x\rightarrow1}(1-x^{2})f^{\prime
}(x)-(1-x^{2})f^{\prime}(x)\right) \\
&  =f^{\prime}(x)\text{ by definition of }D;
\end{align*}
a similar argument shows that $f^{\prime}\in L^{2}(-1,0)$. Hence%
\begin{equation}
f^{\prime}\in L^{2}(-1,1). \label{NC-6}%
\end{equation}
Thus, $f\in AC[-1,1]$ and
\begin{equation}
\lim_{x\rightarrow\pm1}(1-x^{2})f(x)=0. \label{NC-7}%
\end{equation}
From $($\ref{NC-3}$)$ and $($\ref{NC-7}$),$ we see that%
\begin{align*}
\lim_{x\rightarrow\pm1}[f,x]_{2}(x)  &  =\lim_{x\rightarrow\pm1}\left(
x[f,1](x)-(1-x^{2})^{2}f^{\prime\prime}(x)+2(1-x^{2})f(x)\right) \\
&  =-\lim_{x\rightarrow\pm1}(1-x^{2})^{2}f^{\prime\prime}(x).
\end{align*}
To establish $($\ref{NC-4}$),$ it now suffices to prove that%
\begin{equation}
\lim_{x\rightarrow\pm1}(1-x^{2})^{2}f^{\prime\prime}(x)=0. \label{NC-8}%
\end{equation}
Since the proof as $x\rightarrow-1$ is similar to the proof that
$x\rightarrow1,$ we will only show that
\[
\lim_{x\rightarrow1}(1-x^{2})^{2}f^{\prime\prime}(x)=0;
\]
By way of contradiction, suppose that
\[
\lim_{x\rightarrow1}(1-x^{2})^{2}f^{\prime\prime}(x)=c\neq0;
\]
without loss of generality, we may assume that $c>0.$ Then there exists
$x^{\ast}\in(0,1)$ such that%
\[
f^{\prime\prime}(x)\geq\frac{c}{2(1-x^{2})^{2}}\geq\frac{c}{8(1-x)^{2}}%
\quad(x\in\lbrack x^{\ast},1)).
\]
Integrating this inequality over $[x^{\ast},x]\subset\lbrack x^{\ast},1)$
yields%
\[
f^{\prime}(x)\geq\frac{c}{8(1-x)}+f^{\prime}(x^{\ast})-\dfrac{c}{8(1-x^{\ast
})}\quad(x\in\lbrack x^{\ast},1)).\quad
\]
But this contradicts $($\ref{NC-6}$).$ It follows that $($\ref{NC-8}$)$ holds
and this, in turn, establishes $($\ref{NC-4}$).$ Consequently, $D\subset
\mathcal{D}(S)$ and this completes the proof of the theorem.
\end{proof}

As revealed in the proofs of Theorems \ref{Main Theorem 1},
\ref{Main Theorem 2}, \ref{Main Theorem 3} and \ref{f'' in L^2}, we have the
following interesting result.

\begin{corollary}
\label{Smoothness of D(A^2)}If $f\in\mathcal{D}(A^{2})=\mathcal{D}(S)=B=D,$ then
\end{corollary}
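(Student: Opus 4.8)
The plan is to observe that, since all four domain characterizations coincide (Theorems \ref{Main Theorem 1}, \ref{Main Theorem 2} and \ref{Main Theorem 3}), it suffices to take $f\in\mathcal{D}(S)$ and read off both conclusions from work already assembled. First, the assertion $f^{\prime\prime}\in L^{2}(-1,1)$ is exactly the content of Theorem \ref{f'' in L^2}, whose proof applies the CE Theorem on $[0,1)$ with $\varphi(x)=(1-x^{2})^{-2}$ and $\psi(x)=1-x^{2}$, together with Lemma \ref{A_2 limit} to eliminate the boundary contribution (and symmetrically on $(-1,0]$). No further argument is needed for this half.

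For the second assertion, $\ell[f]\in AC[-1,1]$, I would invoke step (c) in the proof that $\mathcal{D}(S)\subset\mathcal{D}(A^{2})$ from Theorem \ref{Main Theorem 1}. There, writing $\ell^{2}[f]=-\left((1-x^{2})\ell^{\prime}[f]\right)^{\prime}\in L^{2}(-1,1)$ and applying the CE Theorem on $[0,1)$ with $\varphi(x)=1/(1-x^{2})$, $\psi(x)=1$ and $w(x)=1$, one obtains $\frac{1}{1-x^{2}}\int_{x}^{1}\ell^{2}[f](t)\,dt\in L^{2}[0,1)$. Using the boundary condition $\lim_{x\rightarrow1}(1-x^{2})\ell^{\prime}[f](x)=[f,1]_{2}(1)=0$ to remove the boundary term, this simplifies to $\ell^{\prime}[f]\in L^{2}[0,1)$; the symmetric argument on $(-1,0]$ yields $\ell^{\prime}[f]\in L^{2}(-1,1)$. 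Since $\ell[f]$ is locally absolutely continuous on $(-1,1)$ with derivative lying in $L^{2}(-1,1)\subset L^{1}(-1,1)$, the integral $\int_{x_{0}}^{x}\ell^{\prime}[f](t)\,dt$ has finite one-sided limits as $x\rightarrow\pm1$, and hence $\ell[f]$ extends to a function in $AC[-1,1]$.

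Since neither half requires any new machinery, there is no genuine obstacle: the corollary is essentially a bookkeeping consequence of Theorems \ref{Main Theorem 1} and \ref{f'' in L^2} and of the CE-Theorem estimate already embedded in their proofs. The only point deserving a word of care is the passage from $\ell^{\prime}[f]\in L^{2}(-1,1)$ to $\ell[f]\in AC[-1,1]$ across the \emph{closed} interval; this rests on the elementary fact that on the bounded interval $(-1,1)$ an $L^{2}$ function is automatically in $L^{1}$, so the antiderivative is continuous up to and including the endpoints $x=\pm1$.
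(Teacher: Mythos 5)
Your proposal is correct and follows the paper's own route: the paper presents this corollary as a direct consequence of Theorem \ref{f'' in L^2} (for $f''\in L^{2}(-1,1)$, hence $f,f'\in AC[-1,1]$) and of step (c) in the proof that $\mathcal{D}(S)\subset\mathcal{D}(A^{2})$ in Theorem \ref{Main Theorem 1} (for $\ell'[f]\in L^{2}(-1,1)$ and $\ell[f]\in AC[-1,1]$), which is exactly what you cite. Your closing remark on passing from $\ell'[f]\in L^{2}(-1,1)\subset L^{1}(-1,1)$ to absolute continuity on the closed interval is the same elementary observation the paper uses implicitly.
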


\begin{enumerate}
\item[(i)] $f^{\prime\prime}\in L^{2}(-1,1)$ so $f,f^{\prime}\in AC[-1,1]; $

\item[(ii)] $\ell^{\prime}[f]\in L^{2}(-1,1)$ and $\ell\lbrack f]\in
AC[-1,1].$
\end{enumerate}

\begin{remark}
As discussed in Section \ref{GKN Operator}, the minimal operator
$T_{2,\mathrm{\min}}$ in $L^{2}(-1,1)$ generated by $\ell^{2}[\cdot]$ has
deficiency index $(4,4).$ From the GKN Theorem $($see \cite[Theorem 4, Section
18.1]{Naimark}$),$ GKN boundary conditions for any self-adjoint extension of
$T_{2,\mathrm{\min}}$ in $L^{2}(-1,1)$ are restrictions of the maximal domain
$\Delta_{2,\mathrm{\max}}$ and have the appearance $($see $( $%
\ref{Form of GKN BCs}$))$
\[
\lbrack f,f_{j}]_{2}(1)-[f,f_{j}]_{2}(-1)=0\quad(f\in\Delta_{2,\mathrm{\max}%
},j=1,2,3,4),
\]
where $\{f_{j}\}_{j=1}^{4}\subset\Delta_{2,\mathrm{\max}}$ are linearly
independent modulo the minimal domain $\Delta_{2,\mathrm{\min}}.$ Taking into
account $[\cdot,\cdot]_{2}$, defined in $($\ref{Sesquilinear Form for Square}%
$),$ it is clear that the boundary conditions given in $($\ref{New Domain D}%
$)$ are not GKN boundary conditions.
\end{remark}

\section{Concluding Remarks\label{Concluding Remarks}}

In \cite{Everitt-Littlejohn-Wellman}, the authors showed that, for
$n\in\mathbb{N},$ the $n^{th}$ composite power of the Legendre differential
expression $\ell\lbrack\cdot]$ is explicitly given by%
\begin{equation}
\ell^{n}[y](x)=\sum_{j=1}^{n}(-1)^{j}\left\{
\begin{array}
[c]{c}%
n\\
j
\end{array}
\right\}  _{1}\left(  (1-x^{2})^{j}y^{(j)}(x)\right)  ^{(j)},
\label{nth power of Legendre expression}%
\end{equation}
where the numbers
\[
\left\{
\begin{array}
[c]{c}%
n\\
j
\end{array}
\right\}  _{1}:=\sum_{r=0}^{j}(-1)^{r+j}\frac{(2r+1)\left(  r^{2}+r\right)
^{n}}{(j-r)!(j+r+1)!}%
\]
are the so-called Legendre-Stirling numbers, a subject of current study in
combinatorics (for example, see \cite{AEGL}, \cite{AGL},
\cite{Andrews-Littlejohn}, \cite{Egge} and \cite{GLN}). The expression in
(\ref{nth power of Legendre expression}) is the key in generating the domain
$\mathcal{D}(A^{n})$ of $A^{n}$ given in (\ref{LD Representation General}).

We conjecture:\medskip

\noindent\underline{Conjecture} Let $A$ denote the Legendre polynomials
self-adjoint operator defined in $($\ref{The operator A}$)$ and $($%
\ref{D(A)}$).$ For $n\in\mathbb{N}$, let $\ell^{n}[\cdot]$ be given as in
$($\ref{nth power of Legendre expression}$)$ and let $[\cdot,\cdot]_{n}$ be
the sequilinear form associated with the maximal domain $\Delta
_{n,\mathrm{\max}}$ of $\ell^{n}[\cdot]$ in $L^{2}(-1,1)$. Then $A_{n}%
=B_{n}=C_{n}=D_{n}$, where\medskip

$%
\begin{array}
[c]{l}%
\text{(i) \ }A_{n}:=\mathcal{D}(A^{n})\medskip,\\
\text{(ii) \thinspace}B_{n}:=\{f:(-1,1)\rightarrow\mathbb{C\mid}f,f^{\prime
},\ldots,f^{(2n-1)}\in AC_{\mathrm{loc}}(-1,1);(1-x^{2})^{n}f^{(2n)}\in
L^{2}(-1,1)\},\medskip\\
\text{(iii) }C_{n}:=\{f:(-1,1)\rightarrow\mathbb{C}\mid f,f^{\prime}%
,\ldots,f^{(2n-1)}\in AC_{\mathrm{loc}}(-1,1);f,\ell^{n}[f]\in L^{2}%
(-1,1);\smallskip\\
\qquad\qquad\qquad\qquad\qquad\qquad\lbrack f,x^{j}]_{n}(\pm1)=0\text{ for
}j=0,1,2,\ldots,n-1\},\smallskip\\
\text{(iv) }D_{n}:=\{f:(-1,1)\rightarrow\mathbb{C}\mid f,f^{\prime}%
,\ldots,f^{(2n-1)}\in AC_{\mathrm{loc}}(-1,1);f,\ell^{n}[f]\in L^{2}%
(-1,1);\smallskip\\
\qquad\qquad\qquad\qquad\qquad\qquad\lim_{x\rightarrow\pm1}\left(
(1-x^{2})^{j}y^{(j)}(x)\right)  ^{(j-1)}=0\text{ for }j=1,2,\ldots
,n\}.\medskip
\end{array}
$

By repeated applications of the CE Theorem, it is not difficult to establish
that if $f\in B_{n},$ then $f^{(n)}\in L^{2}(-1,1);$ this result generalizes
Theorem \ref{ELM}, part (iii) ($n=1)$ and Corollary \ref{Smoothness of D(A^2)}%
, part (i) ($n=2)$.

We remark that, in (iii) above, we can replace the monomials $\{x^{j}%
\}_{j=0}^{n-1}$ by the Legendre polynomials $\{P_{j}\}_{j=0}^{n-1}.$ One of
the difficulties in our efforts to try and prove this conjecture lies in the
fact that the corresponding sesquilinear form $[\cdot,\cdot]_{n},$ associated
with the $n^{th}$ power $\ell^{n}[\cdot]$, is unwieldy at the present time.
\medskip

\noindent\underline{Acknowledgement} The author LLL would like to recognize
the many years of mentoring from W. N. $($Norrie$)$ Everitt, who passed away
on July 17, 2011 at the age of 87. The Legendre differential expression and
Legendre polynomials were among the favorites of Norrie's many mathematical
interests. Quinn and I both felt we were guided by the hand of Norrie in the
writing of this paper.


\begin{thebibliography}{99}                                                                                               %


\bibitem {Akhieser and Glazman}N. I. Akhiezer and I. M. Glazman, \emph{Theory
of linear operators in Hilbert space, }Vol. I and II, Pitman Advanced
Publishing Program, London, 1981.

\bibitem {AEGL}G. E. Andrews, E. S. Egge, W. Gawronski and L. L. Littlejohn,
\emph{The Jacobi-Stirling numbers, }J. Combinatorial Theory Ser. A, 120
(2013), 288-303.

\bibitem {AGL}G. E. Andrews, W. Gawronski and L. L. Littlejohn, \emph{The
Legendre-Stirling numbers, }Discrete Math. 311 (2011), no. 14, 1255-1272.

\bibitem {Andrews-Littlejohn}G. E. Andrews and L. L. Littlejohn, \emph{A
combinatorial interpretation of the Legendre-Stirling numbers, }Proc. Amer.
Math. Soc. 137 (2009), no. 8, 2581-2590.

\bibitem {Chisholm-Everitt}R. S. Chisholm and W. N. Everitt, \emph{On bounded
integral operators in the space of integrable-square functions, }Proc. Roy.
Soc. Edinb. (A), 69 (1970/71), 199-204.

\bibitem {Dunford and Schwartz}N. Dunford and J. T. Schwartz, \emph{Linear
Operators II: Spectral Theory, }John Wiley Interscience Publishers, 1963.

\bibitem {Egge}E. S. Egge, \emph{Legendre-Stirling permutations, }European J.
Combin. 31 (2010), 1735-1750.

\bibitem {Everitt-1980}W. N. Everitt, \emph{Legendre polynomials and singular
differential operators, }Lecture Notes in Mathematics, Volume \textbf{827},
Springer-Verlag, New York, 1980, 83-106.

\bibitem {Everitt-Chisholm-Littlejohn}W. N. Everitt, R. S. Chisholm and L. L.
Littlejohn, \emph{An Integral Operator Inequality with Applications,} J. of
Inequal. \& Applications\textbf{\ }3(1999), 245-266.

\bibitem {Everitt-Littlejohn-Maric}W. N. Everitt, L. L. Littlejohn and V.
Mari\'{c}, \emph{On properties of the Legendre differential expression,}
Result. Math.\textbf{\ }42 (2002), 42-68.

\bibitem {Everitt-Littlejohn-Wellman}W. N. Everitt, L. L. Littlejohn and R.
Wellman, \emph{Legendre polynomials, Legendre-Stirling numbers, and the
left-definite spectral analysis of the Legendre differential expression}, J.
Comput. Appl. Math.,\textit{\ }148(2002), 213-238.

\bibitem {GLN}W. Gawronski, L. L. Littlejohn and T. Neuschel, \emph{On the
asymptotic normality of the Legendre-Stirling numbers of the second kind,
}European J. Combin. 49 (2015), 218-231.

\bibitem {Hardy-Littlewood-Polya}G. H. Hardy, J. E. Littlewood and G.
P\'{o}lya, \emph{Inequalities, }Cambridge University Press, 1952.

\bibitem {Krall-Littlejohn}A. M. Krall and L. L. Littlejohn, \emph{The
Legendre polynomials under a left-definite energy norm, }Quaestiones Math.,
16(4), 1993, 393-403.

\bibitem {Littlejohn and Wellman}L. L. Littlejohn and R. Wellman, \emph{A
general left-definite theory for certain self-adjoint operators with
applications to differential equations}, J. Differential Equations, 181(2),
2002, 280-339.

\bibitem {Littlejohn and Zettl}L. L. Littlejohn and A. Zettl, \emph{The
Legendre equation and its self-adjoint operators}\textit{, }Electron. J. Diff.
Equ., Vol. 2011 (2011), No. 69, pp. 1-33.

\bibitem {Naimark}M. A. Naimark, \emph{Linear differential operators
II}\textbf{, }Frederick Ungar Publishing Co., New York.

\bibitem {Pleijel1}\AA . Pleijel, \emph{On Legendre's Polynomials,
}Mathematical Studies 21, 1975, 175-180, North Holland Publishing Co.

\bibitem {Pleijel}\AA . Pleijel, \emph{On the boundary conditions for the
Legendre polynomials, }Ann. Acad. Sci. Fenn. Ser. A1 Math. 2, 1976, 397-408.

\bibitem {Vonhoff}R. Vonhoff, \emph{A left-definite study of Legendre's
differential equation and of the fourth-order Legendre type differential
equation, }Result. Math. 37, 2000, 155-196.

\bibitem {Rudin}W. Rudin, \emph{Functional Analysis}, McGraw-Hill Publishers,
New York, 1973.

\bibitem {Titchmarsh}E. C. Titchmarsh, \emph{Eigenfunction expansions
associated with second-order differential equations, }Clarendon Press, Oxford,
England, 1946.

\bibitem {Weyl}H. Weyl, \emph{\"{U}ber gew\"{o}hnliche Differentialgleichungen
mit Singularit\"{a}ten und die zugeh\"{o}rigen Entwicklungen willk\"{u}rlicher
Funktionen}, Mathematische Annalen, Vol. 68, 1910, 220-269.

\bibitem {Wicks}Q. Wicks, \emph{The Square of the Legendre Differential
Operator, }Ph.D. thesis, Baylor University, Waco, TX, May 2016.
\end{thebibliography}
\end{document}